\newtheorem{theorem}{Theorem}[section]
\newtheorem{proposition}[theorem]{Proposition}
\newtheorem{corollary}[theorem]{Corollary}
\theoremstyle{definition}
\newtheorem{definition}[theorem]{Definition}
\newtheorem{example}[theorem]{Example}
\newtheorem{question}[theorem]{Question}
\newtheorem{problem}[theorem]{Problem}
\newtheorem{remark}[theorem]{Remark}
\newcommand{\Tr}{\text{Tr}}
\newcommand{\id}{\text{id}}
\newcommand{\End}{\text{End}}
\newcommand{\Hom}{\text{Hom}}
\newcommand{\Rep}{\text{Rep}}
\newcommand{\g}{\mathfrak{g}}
\newcommand{\kk}{\mathfrak{k}}
\newcommand{\p}{\mathfrak{p}}
\newcommand{\n}{\mathfrak{n}}
\renewcommand{\u}{\mathfrak{u}}
\renewcommand{\l}{\mathfrak{l}}
\newcommand{\z}{\mathfrak{z}}
\newcommand{\ben}{\begin{enumerate}}
\newcommand{\een}{\end{enumerate}}
\theoremstyle{plain}
\newtheorem*{sol}{Solution}
\theoremstyle{definition}
\theoremstyle{remark}
\newcommand{\solu}[1]{\begin{sol}{\bf (\ref{#1})}}
\begin{document}

\title{Representation theory in complex rank, II}

\author{Pavel Etingof}
\address{Department of Mathematics, Massachusetts Institute of Technology,
Cambridge, MA 02139, USA}
\email{etingof@math.mit.edu}
\maketitle

\vskip .1in
\centerline{\bf To the memory of Andrei Zelevinsky}
\vskip .1in

\section{Introduction}

Let ${\mathcal C}_t$ be one of the categories
$\Rep(S_t)$, $\Rep(GL_t)$, $\Rep(O_t)$, $\Rep(Sp_{2t})$,
obtained by interpolating the classical representation categories 
${\bf Rep}(S_n)$, ${\bf Rep}(GL_n)$, ${\bf Rep}(O_n)$, ${\bf Rep}(Sp_{2n})$
to complex values of $n$, defined by Deligne-Milne and 
Deligne (\cite{DM,De1,De2}).\footnote{Note that, to avoid confusion, we denote ordinary representation categories by 
${\bf Rep}$, and interpolated ones by $\Rep$.} In \cite{E1}, by analogy with 
the representation theory of real reductive groups, we proposed to consider 
various categories of ``Harish-Chandra modules'' based on ${\mathcal C}_t$, 
whose objects $M$ are objects of ${\mathcal C}_t$ equipped with 
additional morphisms satisfying certain relations. In this situation, 
the structure of an object of ${\mathcal C}_t$ on $M$ 
is analogous to the action of the ``maximal compact subgroup'', 
while the additional morphisms play the role of the 
``noncompact part''. The papers \cite{E1, EA, Ma} study examples of such categories 
based on the category $\Rep(S_t)$ 
(which could be viewed as doing ``algebraic combinatorics in 
complex rank''). This paper is a sequel to these works, and its goal is to 
start developing ``Lie theory in complex rank'', extending 
the constructions of \cite{E1} to ``Lie-theoretic'' 
categories $\Rep(GL_t)$, $\Rep(O_t)$, $\Rep(Sp_{2t})$ (based on the ideas outlined in \cite{E2}). 
Namely, we define complex rank analogs of 
the parabolic category O and the representation categories of real reductive Lie groups and supergroups, 
affine Lie algebras, and Yangians. 
We develop a framework and language for studying these categories, 
prove basic results about them, and outline a number of directions of 
further research. We plan to pursue these directions in future papers. 
 
The organization of the paper is as follows. 

In Section 2, we give some preliminaries on groups in tensor categories, 
and then give background on Deligne categories $\Rep(GL_t)$, $\Rep(O_t)$,
$\Rep(Sp_{2t})$. In particular, we show that the groups $GL_t$, $O_t$, $Sp_{2t}$ are connected, 
and therefore can be effectively studied by looking at their Lie algebras.  

In Section 3, we explain how to interpolate classical symmetric pairs,  
and then proceed to discuss the basic theory of Harish-Chandra modules. 

In Section 4, we develop the interpolation of 
the basic theory of parabolic category O.

In Section 5, we discuss the interpolations of classical Lie supergroups, 
$GL_{t|s}$ and $OSp_{t|2s}$. 

In Section 6, we discuss the interpolation of the representation theory of affine Lie algebras.  

Finally, in Section 7, we describe the interpolation of Yangians of classical Lie algebras. 
 
{\bf Acknowledgments.} The author is grateful to 
I. Entova-Aizenbud and V. Ostrik for many useful discussions. 
The work of the author was  partially supported by the NSF grants
DMS-0504847 and DMS-1000113.

\section{Preliminaries}

\subsection{Symmetric tensor categories}

Throughout the paper, we work over the field $\Bbb C$ of complex numbers. 
By a symmetric tensor category, we will mean a $\Bbb C$-linear 
artinian\footnote{An artinian category is an abelian category whose objects have finite length and morphism spaces are finite dimensional.} 
rigid symmetric monoidal category ${\mathcal{C}}$, with biadditive tensor product and ${\rm End}(\bold 1)=\Bbb C$, as in \cite{DM}
(such categories are also called {\it pre-Tannakian categories}, or {\it tensor categories satisfying finiteness assumptions} (\cite{De1}, 2.12.1)). 
We will not keep track of bracket positions in tensor products of several objects. 
For $V\in {\mathcal{C}}$, we will denote by ${\rm ev}_V : V^*\otimes V\to \bold 1$ and ${\rm coev}_V: \bold 1\to V\otimes V^*$
the evaluation and coevaluation morphisms of $V$. 

Recall that in a symmetric tensor category ${\mathcal{C}}$, it makes sense to talk about any linear algebraic structure 
(such as a (commutative) associative algebra, Lie algebra, module over such an algebra, etc). 
We will also routinely consider ordinary algebraic structures (over $\Bbb C$) as those in ${\mathcal{C}}$,
by using the functor $A\mapsto A\otimes \bold 1$.   
If ${\mathcal{D}}$ is an artinian category, by ${\rm Ind}{\mathcal{D}}$ we will mean the ind-completion of ${\mathcal{D}}$; 
it consists of inductive limits of objects of ${\mathcal{D}}$ (for instance, the ind-completion of the category of finite dimensional vector spaces is the category of all vector spaces). 

\subsection{Affine group schemes in symmetric tensor categories and their Lie algebras}

Recall that an affine group scheme $G$ in a symmetric tensor category 
${\mathcal{C}}$ corresponds to a commutative Hopf algebra $H$ in ${\mathcal{C}}$; we write $H=O(G)$ and $G={\rm Spec}H$. 
To such a group scheme $G$ we can attach the category ${\bf Rep}(G)$ of representations of $G$ in ${\mathcal{C}}$, which is, by definition, the category of 
(left) $O(G)$-comodules in ${\mathcal{C}}$. 

Note that $O(G)$ carries two commuting actions of $G$ 
preserving the algebra structure -- left translations and right translations; as coactions, they are both defined by the coproduct $\Delta: O(G)\to O(G)\otimes O(G)$. 
The corresponding diagonal action is called the {\it adjoint action}, and it preserves the Hopf algebra structure. 

For an affine group scheme $G$ we can define its Lie algebra 
in the same way as in classical Lie theory. 
Namely, let $I$ be the augmentation 
ideal in $H$. Then $I/I^2$ has a natural Lie coalgebra structure
(this is a general property of Hopf algebras), so $\g:=(I/I^2)^*$ (which is, in general, 
a pro-object) is a Lie algebra, denoted by $\g={\rm Lie}G$. 

Moreover, if $M$ is a locally algebraic $G$-module (i.e., a left $H$-comodule in ${\rm Ind}{\mathcal{C}}$) then we have a natural 
map $\zeta_M: M\to I/I^2\otimes M$ (the categorical analog of the map $x\mapsto \rho(x)-1\otimes x$ taken modulo $I^2$ in the 
first component, where $\rho: M\to H\otimes M$ is the coaction) which defines an action $\g\otimes M\to M$ 
of $\g$ on $M$ (the categorical analog of the derivative of a Lie group representation). 
This gives rise the ``derivative'' functor $D: {\rm Ind}{\bf Rep}(G)\to {\rm Ind}{\bf Rep}(\g)$. 
In particular, taking $M=H$ and $\rho=\Delta$ (the coproduct), we get an action $\delta: \g\otimes H\to H$ 
of $\g$ on $H$ by algebra derivations (this is a categorical analog of infinitesimal left translations).   

\begin{definition} We say that an affine group scheme $G$ in ${\mathcal{C}}$ is connected 
if $H^\g:={\rm Ker}\zeta_H$ is isomorphic to $\bold 1$.\footnote{For classical algebraic groups $G$ over $\Bbb C$, this definition coincides 
with the usual definition of connectedness: it says that a regular function on $G$ annihilated by all the right-invariant vector fields on $G$ must be constant.} 
\end{definition}

In particular, we see that the connectedness property 
is preserved under symmetric tensor functors. 

\begin{proposition}\label{ff} 
$G$ is connected if and only if the functor 
$$
D: {\rm Ind}{\bf Rep}(G)\to {\rm Ind}{\bf Rep}(\g)
$$  
is fully faithful. 
\end{proposition}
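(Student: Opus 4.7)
The plan is to translate full faithfulness of $D$ into the pointwise statement that $V^G$ equals $\Ker\zeta_V$ for every $V\in \Ind\,{\bf Rep}(G)$, and then to prove that equality via the ``cofree'' embedding $V\hookrightarrow H\otimes V_{\mathrm{triv}}$. Here I write $V^G$ for the equalizer of $\rho_V$ and the trivial coaction $v\mapsto 1\otimes v$.

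First I would reduce to the case of dualizable $M$: since every object of $\Ind\,{\bf Rep}(G)$ is a filtered colimit of objects of ${\bf Rep}(G)$ and $D$ commutes with filtered colimits, it suffices to check $\Hom_G(M,N)=\Hom_\g(DM,DN)$ when $M\in {\bf Rep}(G)$. For such $M$, dualizability in $\mathcal C$ gives the identifications $\Hom_G(M,N)=(N\otimes M^*)^G$ and $\Hom_\g(DM,DN)=\Ker\zeta_{N\otimes M^*}$, under which the map induced by $D$ on morphism spaces is the tautological inclusion. Thus the proposition reduces to the assertion: $G$ is connected iff $V^G=\Ker\zeta_V$ for every $V\in \Ind\,{\bf Rep}(G)$.

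For the direction ``$\Leftarrow$'', specialize to $V=H$ with coaction $\Delta$: applying $\mathrm{id}\otimes\epsilon$ to $\Delta(h)=1\otimes h$ yields $h=\epsilon(h)\cdot 1$, so $H^G=\bold 1$, and full faithfulness forces $\Ker\zeta_H=\bold 1$, which is the definition of connectedness. For the direction ``$\Rightarrow$'', assume $\Ker\zeta_H=\bold 1$. Given $V\in \Ind\,{\bf Rep}(G)$, let $V_{\mathrm{triv}}$ denote $V$ viewed in $\Ind\,\mathcal C$ with trivial $G$-action and equip $H\otimes V_{\mathrm{triv}}$ with the cofree coaction $\Delta\otimes\mathrm{id}$. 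Coassociativity of $\rho_V$ says that $\rho_V:V\to H\otimes V_{\mathrm{triv}}$ is $G$-equivariant, and the counit axiom $(\epsilon\otimes\mathrm{id})\rho_V=\mathrm{id}_V$ makes it injective with left inverse $\epsilon\otimes\mathrm{id}$. Unwinding the definition of $\zeta$ on a cofree comodule gives $\zeta_{H\otimes V_{\mathrm{triv}}}=\zeta_H\otimes\mathrm{id}_{V_{\mathrm{triv}}}$, and exactness of $\otimes V_{\mathrm{triv}}$ in $\Ind\,\mathcal C$ then yields $\Ker\zeta_{H\otimes V_{\mathrm{triv}}}=(\Ker\zeta_H)\otimes V_{\mathrm{triv}}=\bold 1\otimes V_{\mathrm{triv}}$ by connectedness. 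Because $\rho_V$ is $\g$-equivariant, $\rho_V(\Ker\zeta_V)\subseteq \bold 1\otimes V_{\mathrm{triv}}$; composing with $\epsilon\otimes\mathrm{id}$ then yields $\rho_V(v)=1\otimes v$ for every $v\in \Ker\zeta_V$, i.e.\ $\Ker\zeta_V\subseteq V^G$. The reverse inclusion is immediate.

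The main technical obstacle is verifying the two identifications used in the last step: $\zeta_{H\otimes V_{\mathrm{triv}}}=\zeta_H\otimes\mathrm{id}$ (which requires unwinding the categorical definition of $\zeta$ on a cofree comodule, since there are no literal ``elements'' to push through) and $\Ker(\zeta_H\otimes\mathrm{id})=(\Ker\zeta_H)\otimes V_{\mathrm{triv}}$ (which uses exactness of $\otimes$ in $\Ind\,\mathcal C$, standard in the pre-Tannakian setting but worth recording explicitly). Granted these, the argument is a clean diagram chase.
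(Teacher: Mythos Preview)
Your proof is correct and follows essentially the same route as the paper's: reduce full faithfulness to the equality $V^G=\Ker\zeta_V$ via $\Hom_G(M,N)\cong (N\otimes M^*)^G$, embed $V$ into the cofree comodule $H\otimes V_{\mathrm{triv}}$ via the coaction, and use that on the cofree comodule the $\g$-invariants are $(\Ker\zeta_H)\otimes V_{\mathrm{triv}}=\bold 1\otimes V_{\mathrm{triv}}$ by connectedness. The paper's argument is terser (it does not spell out the filtered-colimit reduction, the computation $H^G=\bold 1$, or the two technical identifications you flag), but the skeleton is identical.
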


\begin{proof}
Suppose $G$ is connected. We need to show that for any $X,Y\in {\bf Rep}(G)$, any $\g$-homomorphism 
$f: X\to Y$ is actually a $G$-homomorphism. For this, it's enough to show that 
for any $G$-module $U$ in ${\mathcal{C}}$, one has $U^G=U^\g$; indeed, then we can take $U=X^*\otimes Y$ 
(note that a priori, we only know that $U^G\subset U^\g\subset \Hom_{\mathcal{C}}(\bold 1,U)$). 
We have an inclusion $U\to O(G)\otimes U_{\rm obj}$, where $U_{\rm obj}$ is the underlying object of $U$ with the trivial $G$-action 
(the coaction of $O(G)$ on $U$). Thus, it suffices to check that 
$O(G)^G=O(G)^\g$ (i.e., the invariants under the usual and infinitesimal left translations coincide), i.e., that $O(G)^\g=\bold 1$, which is the definition of connectedness. 

Conversely, if the functor $D$ is fully faithful then $O(G)^G=O(G)^\g$, so $O(G)^\g=\bold 1$, i.e., $G$ is connected.  
\end{proof} 

\subsection{Classical groups in symmetric tensor categories} 

Let us now define the general linear, orthogonal, and symplectic groups 
in symmetric tensor categories. 

\begin{definition}\label{clgroups}
(i) Let $V$ be an object in a symmetric tensor category ${\mathcal{C}}$. 
The group scheme $GL(V)$ is cut out inside $V\otimes V^*\oplus V^*\otimes V$ 
by the equations $AB=BA={\rm Id}$ (i.e., $O(GL(V))$ is the quotient of $S(V^*\otimes V\oplus V\otimes V^*)$
by the ideal $J$ defined by these equations). 

(ii) Suppose that $V$ is equipped with a symmetric (respectively, skew-symmetric) 
isomorphism $\psi: V\to V^*$. 
The group $O(V)$ (respectively, $Sp(V)$) is cut out inside 
$V\otimes V^*$ by the equations $AA^*=A^*A={\rm Id}$, where $A^*$ is 
the adjoint of $A$ with respect to $\psi$. 
\end{definition}

The structure of an affine group scheme on $GL(V)$, $O(V)$, $Sp(V)$ is defined in the same way as in classical 
Lie theory. 

\begin{remark} 1. The equations in Definition \ref{clgroups} are to be understood categorically; 
e.g., $AB=BA={\rm Id}$ means that the ideal $J$ is generated by  
the images of $V^*\otimes V$ inside $S(V^*\otimes V\oplus V\otimes V^*)$
under the morphisms 
$$
\sigma_{3,4}\circ({\rm Id}_{V^*}\otimes {\rm coev}_V\otimes {\rm Id}_V)-{\rm ev}_V\text{ and }
\sigma_{3,4}\sigma_{12,34}\circ ({\rm Id}_{V^*}\otimes {\rm coev}_V\otimes {\rm Id}_{V})-{\rm ev}_V
$$   
(where $\sigma$ denotes the permutation of the appropriate components). 

2. In classical Lie theory one of the two equations $AB=BA={\rm Id}$ or $AA^*=A^*A={\rm Id}$ suffices (and implies the other), but  
we don't expect that this is the case in general. The proof of this implication uses determinants, 
which are not available in general, and the statement fails for infinite dimensional 
vector spaces (which don't form a rigid category, however). 
\end{remark}

Now let us describe the Lie algebras of the groups $GL(V)$, $O(V)$, $Sp(V)$. 
First of all, for any $V$, the object ${\mathfrak{gl}}(V):=V\otimes V^*$ 
is naturally an associative algebra and thus a Lie algebra, with the bracket being the commutator. 
Next, if $V$ is equipped with a symmetric (respectively, skew-symmetric) isomorphism 
$\psi: V\to V^*$, then we can define an automorphism of Lie algebras 
$\theta: {\mathfrak{gl}}(V)\to {\mathfrak{gl}}(V)$ given by 
$$
\theta=-\sigma(\psi\otimes \psi^{-1}),
$$ 
and one can define the Lie algebra ${\mathfrak{o}}(V)$ (respectively, ${\mathfrak{sp}}(V)$) to be ${\rm Ker}(\theta-{\rm Id})$. 
Note that as objects we have ${\mathfrak{o}}(V)=\wedge^2V$ and ${\mathfrak{sp}}(V)=S^2V$. 

\begin{proposition}\label{leaal}
We have ${\rm Lie}GL(V)={\mathfrak{gl}}(V)$, ${\rm Lie}O(V)={\mathfrak{o}}(V)$, 
${\rm Lie}Sp(V)={\mathfrak{sp}}(V)$. 
\end{proposition}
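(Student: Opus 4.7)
The plan is to compute $\mathrm{Lie}\,G = (I/I^2)^*$ in each case by linearizing the defining equations of $G$ at the identity, directly categorifying the classical calculation. For $GL(V)$, write $O(GL(V)) = S(V^* \otimes V \oplus V \otimes V^*)/J$ with generators $A \in V^* \otimes V$ and $B \in V \otimes V^*$, the augmentation $\epsilon$ being evaluation at the identity (given on linear generators by the appropriate evaluation morphisms). Introducing shifted generators $a = A - \mathrm{Id}$, $b = B - \mathrm{Id}$ on which $\epsilon$ vanishes, the defining relations $AB = BA = \mathrm{Id}$ become $a + b + ab = 0$ and $a + b + ba = 0$, which both collapse modulo $I^2$ to the single linear relation $a + b = 0$. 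Hence $I/I^2 \cong V^* \otimes V$, whose dual is $V \otimes V^* = \mathfrak{gl}(V)$.

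For $O(V)$ and $Sp(V)$, perform the analogous linearization inside $S(V \otimes V^*)$ with shifted generator $a$. The relation $AA^* = \mathrm{Id}$ becomes $a + a^* + aa^* = 0$, whose linear part is $a + a^* = 0$. A direct unwinding of the formula $\theta = -\sigma(\psi \otimes \psi^{-1})$ identifies the adjoint $a \mapsto a^*$ with $-\theta$, so the relation reads $(\mathrm{Id} - \theta)(a) = 0$, cutting out $\mathrm{Ker}(\theta - \mathrm{Id}) = \mathfrak{o}(V)$ (respectively $\mathfrak{sp}(V)$) inside the ambient tangent space $\mathfrak{gl}(V)$. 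Since $\theta^2 = \mathrm{Id}$ forces $V \otimes V^* = \mathrm{Ker}(\theta - \mathrm{Id}) \oplus \mathrm{Ker}(\theta + \mathrm{Id})$, dualizing $I/I^2 \cong (V \otimes V^*)/\mathrm{Im}(\mathrm{Id} - \theta)$ via the canonical self-pairing of $V \otimes V^*$ returns $\mathfrak{o}(V)$ (resp.\ $\mathfrak{sp}(V)$) as the Lie algebra.

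Finally, one must verify that the Lie bracket on $(I/I^2)^*$ coming from the Lie coalgebra structure on $I/I^2$ agrees with the commutator on $\mathfrak{gl}(V)$ and its restrictions to $\mathfrak{o}(V)$, $\mathfrak{sp}(V)$. This is the categorification of the standard matrix-group calculation: in shifted variables the coproduct on $O(GL(V))$ reads $\Delta(a) = a \otimes 1 + 1 \otimes a + q(a)$, where $q(a)$ is the quadratic matrix-product term coming from the morphism $V^* \otimes V \to V^* \otimes V \otimes V^* \otimes V$ built out of $\mathrm{coev}_V$; antisymmetrizing $q(a)$ modulo $I^2 \otimes I + I \otimes I^2$ yields exactly $[\cdot,\cdot]$. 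The main obstacle I anticipate is not the classical content but the categorical bookkeeping: formalizing the shift $a = A - \mathrm{Id}$ rigorously in the symmetric tensor setting, verifying $a^* = -\theta(a)$ from the categorical definition of the adjoint, and tracking the self-duality identifications without sign errors.
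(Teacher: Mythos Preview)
Your proposal is correct and follows exactly the approach the paper indicates: the paper's proof is the single sentence ``This is readily obtained as in classical Lie theory, by linearizing the equations defining the corresponding groups,'' and your write-up is precisely that linearization carried out in detail. The categorical bookkeeping you flag (the shift $a=A-\mathrm{Id}$, the identification $a^*=-\theta(a)$, and the dualization) is the only real content, and you have it right.
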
 

\begin{proof}
This is readily obtained as in classical Lie theory, by linearizing the equations defining the corresponding groups. 
\end{proof} 

Observe that we have a Lie subalgebra ${\mathfrak{sl}}(V)\subset {\mathfrak{gl}}(V)$, where ${\mathfrak{sl}}(V)$ 
is the kernel of the evaluation morphism. This Lie algebra is the Lie algebra of the group scheme 
$PGL(V)=GL(V)/\Bbb C^*$, defined by the equality $O(PGL(V))=O(GL(V))_0$, where the subscript $0$ means the degree zero part 
under the $\Bbb Z$-grading on $O(GL(V))$ in which $A$ has degree $1$ and $B$ has degree $-1$. 
Note, however, that in general, we cannot define the group scheme $SL(V)$ (as the determinant 
character of $GL(V)$ is not defined).

\subsection{The fundamental group of a symmetric tensor category}

Let us recall the basic theory of fundamental groups of symmetric tensor categories (\cite{De1}, Section 8). 
If ${\mathcal{C}}$ is a symmetric tensor category, then one can define 
a commutative algebra $R_{\mathcal{C}}$ in ${\rm Ind}({\mathcal{C}}\boxtimes {\mathcal{C}})$ by the formula 
$$
R_{\mathcal{C}}:=(\oplus_{X\in {\mathcal{C}}}X\boxtimes X^*)/E,
$$
where $E$ is the sum of the images of the morphisms 
$$
f\boxtimes {\rm Id}_{Y^*}-{\rm Id}_X\boxtimes f^*
$$
over all objects $X,Y\in {\mathcal{C}}$ and morphisms $f: X\to Y$. 
The multiplication in $R_{\mathcal{C}}$ 
is just the tensor product (i.e., it tautologically maps 
$(X\boxtimes X^*)\otimes (Y\boxtimes Y^*)$ to $(X\otimes Y)\boxtimes (Y^*\otimes X^*)$). 
If ${\mathcal{C}}$ is semisimple, then $R_{\mathcal{C}}=\oplus_X X\boxtimes X^*$,
where $X$ runs over the isomorphism classes of simple objects of ${\mathcal{C}}$. 

Let $H_{\mathcal{C}}=T(R_{\mathcal{C}})\in {\rm Ind}{\mathcal{C}}$, 
where $T: {\mathcal{C}}\boxtimes {\mathcal{C}}\to {\mathcal{C}}$ is the tensor product functor 
(so $H_{\mathcal{C}}=\oplus_X X\otimes X^*$ in the semisimple case). 
Then $H_{\mathcal{C}}$ is a commutative Hopf algebra. Indeed, the coproduct maps 
$X\otimes X^*$ to $(X\otimes X^*)\otimes (X\otimes X^*)$ by means of 
the morphism ${\rm Id}_X\otimes {\rm coev}_{X^*}\otimes {\rm Id}_{X^*}$. 
This Hopf algebra can be viewed as the algebra $O(\pi({\mathcal{C}}))$ of regular functions 
on an affine group scheme $\pi({\mathcal{C}})$ in ${\mathcal{C}}$, which is called the {\it fundamental group} 
of ${\mathcal{C}}$. Note that every object $X\in {\mathcal{C}}$ has a natural action of $\pi({\mathcal{C}})$ 
(i.e., a coaction of $H_{\mathcal{C}}$). 

If $F: {\mathcal{C}}\to {\mathcal{D}}$ is a symmetric tensor functor between two
symmetric tensor categories, then we have a natural homomorphism of Hopf algebras 
$\xi_F: F(H_{\mathcal{C}})\to H_{\mathcal{D}}$.
Consider the category ${\bf Rep}_{\mathcal{D}}(\pi({\mathcal{C}}))$
of representations of $\pi({\mathcal{C}})$ in ${\mathcal{D}}$, which by definition is 
the category of $Y\in {\mathcal{D}}$ with a coaction $\tau: Y\to Y\otimes F(H_{\mathcal{C}})$  
such that $({\rm Id}_Y\otimes \xi_F)\circ \tau: Y\to Y\otimes H_{\mathcal{D}}$ is the canonical coaction of $H_{\mathcal{D}}$ on $Y$. 
 
The following theorem comes out of the standard formalism of fundamental groups: 

\begin{theorem} (\cite{De1}, Theorem 8.17)
The functor $F$ defines an equivalence of categories 
${\mathcal C}\to {\bf Rep}_{\mathcal{D}}(\pi({\mathcal{C}}))$.  
\end{theorem}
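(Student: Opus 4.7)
The plan is to construct the candidate equivalence $\widetilde F : \mathcal{C} \to \mathbf{Rep}_\mathcal{D}(\pi(\mathcal{C}))$ explicitly, and then verify full faithfulness and essential surjectivity. First, every object $X \in \mathcal{C}$ carries a tautological coaction $\rho_X : X \to X \otimes H_\mathcal{C}$, obtained by composing $\text{id}_X \otimes \text{coev}_X : X \to X \otimes X \otimes X^*$ (suitably braided) with the canonical morphism $X \otimes X^* \to H_\mathcal{C}$ from the summand indexed by $X$. Coassociativity and counitality of $\rho_X$ follow formally from the coproduct and counit on $H_\mathcal{C}$. Applying $F$ yields a coaction $F(\rho_X) : F(X) \to F(X) \otimes F(H_\mathcal{C})$, and the compatibility condition $(\text{id}_{F(X)} \otimes \xi_F) \circ F(\rho_X) = \rho^{H_\mathcal{D}}_{F(X)}$ is immediate from the naturality of the comparison map $\xi_F$. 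This defines $\widetilde F(X) := (F(X), F(\rho_X)) \in \mathbf{Rep}_\mathcal{D}(\pi(\mathcal{C}))$, and functoriality in $X$ is clear.

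For full faithfulness, I would show that a morphism $\phi : F(X) \to F(Y)$ in $\mathcal{D}$ intertwines the $F(H_\mathcal{C})$-coactions if and only if it equals $F(g)$ for a (unique) morphism $g : X \to Y$ in $\mathcal{C}$. By rigidity, this reduces to identifying the $\pi(\mathcal{C})$-invariants inside $F(X^* \otimes Y)$. The defining relations of $H_\mathcal{C}$ as the quotient of $\bigoplus_X X \boxtimes X^*$ by images of $f \boxtimes \text{id}_{Y^*} - \text{id}_X \boxtimes f^*$ are designed precisely so that, in the tautological case $F = \text{id}_\mathcal{C}$, one has $(X^* \otimes Y)^{\pi(\mathcal{C})} = \Hom_\mathcal{C}(X,Y) \otimes \mathbf{1}$; this is essentially a Yoneda-type argument applied to the coend presentation of $H_\mathcal{C}$. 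The general case follows by transporting this identification through the exact symmetric tensor functor $F$, which commutes with the coend construction that produces $H_\mathcal{C}$.

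For essential surjectivity, given $(Y, \tau) \in \mathbf{Rep}_\mathcal{D}(\pi(\mathcal{C}))$, one must produce $X \in \mathcal{C}$ together with an isomorphism $\widetilde F(X) \cong (Y, \tau)$. Here one invokes relative Tannakian reconstruction: using $\tau$ and the comparison $\xi_F$, the object $Y$ may be realized as a subquotient of some $F(X_0)$ via a matrix-coefficient construction, and its defining relations descend to relations in $\mathcal{C}$ thanks to the coaction compatibility. Equivalently, both $\mathcal{C}$ and $\mathbf{Rep}_\mathcal{D}(\pi(\mathcal{C}))$ satisfy the same universal property among symmetric tensor categories equipped with a factorization of $F$ through the tautological action of $\pi(\mathcal{C})$. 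This step is the main obstacle, and rests on the full coend-theoretic machinery of the fundamental group developed in \cite{De1}, Section 8; once it is granted, the remaining checks are formal manipulations with coactions and Hopf-algebra axioms.
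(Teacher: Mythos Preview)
The paper does not supply its own proof of this theorem: it is stated with the citation to \cite{De1}, Theorem 8.17, and introduced only by the sentence ``The following theorem comes out of the standard formalism of fundamental groups.'' So there is nothing in the paper to compare your argument against.

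Your sketch is a correct outline of the standard Tannakian argument. The construction of $\widetilde F$ via the tautological coaction is right, and your reduction of full faithfulness to computing $\pi(\mathcal{C})$-invariants is the expected move. You also correctly identify essential surjectivity as the substantive step and acknowledge that it ultimately rests on the reconstruction machinery in \cite{De1}, Section 8. In that sense your proposal is not really an independent proof either: it is a roadmap that, like the paper, defers the core content to Deligne. If you want a self-contained argument, the place to do real work is the essential surjectivity paragraph, where ``one invokes relative Tannakian reconstruction'' and ``its defining relations descend to relations in $\mathcal{C}$'' are doing all the heavy lifting without justification.
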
   

\subsection{The category $\Rep(GL_t)$} 

Let us review the definition and known results about $\Rep(GL_t)$. 

The category $\Rep(GL_t)$ was first defined in \cite{DM}, Examples 1.26, 1.27
(see also \cite{De1,De2} and \cite{CW} for a review). It is obtained 
by interpolating ${\bf Rep}(GL_n)$ to non-integer values of $n$, as follows.  

Recall that in the classical category ${\bf Rep}(GL_n)$ we have the 
vector representation $V=\Bbb C^n$, and every irreducible representation 
of $GL_n$ occurs in $V^{\otimes r}\otimes V^{*\otimes s}$ for some $r$, $s$. 
Now, 
$$
\Hom(V^{\otimes r_1}\otimes V^{*\otimes s_1},V^{\otimes r_2}\otimes V^{*\otimes s_2})=
\Hom(V^{\otimes r_1+s_2},V^{\otimes r_2+s_1}),
$$
so it is nonzero only if $r_1+s_2=r_2+s_1=m$, and 
in the latter case is spanned by elements 
of $\Bbb C[S_m]$, by the Fundamental Theorem of Invariant Theory
(this spanning set is a basis if $n\ge m$). 
The category ${\bf Rep}(GL_n)$ can then be defined 
as the (additive) Karoubian closure of the subcategory 
with objects $[r,s]:=V^{\otimes r}\otimes V^{*\otimes s}$ 
and morphisms as above. 

Now consider composition of morphisms. 
To do so, note that the elements of $S_m$ defining 
morphisms can be depicted as oriented planar tangles (with possibly intersecting strands) 
with $r_1$ inputs and $s_1$ outputs on the bottom and 
$r_2$ inputs and $s_2$ outputs on the top, and 
$m$ arrows, each going from an input to an output. 
The composition of morphisms is then defined 
as concatenation of tangles, followed by closed loop removal, 
with each removed loop earning a factor of $n$. 
For example, if $A: [1,1]\to [1,1]$ 
is given by $A={\rm coev}_V\circ {\rm ev}_V$,
then $A^2=nA$.  

Now, given $t\in \Bbb C$, one can define the category $\widetilde{\Rep}(GL_t)$ 
with objects $[r,s]$, $r,s\in \Bbb Z_+$, 
and the space of morphisms 
$\Hom([r_1,s_1],[r_2,s_2])$ 
being spanned by planar tangles as above, with the same composition law 
as above, except that every removed closed loop earns a factor of $t$. 

\begin{remark} The endomorphism algebra $\End([r,s])$ is called the {\it walled Brauer algebra} and denoted $B_{r,s}(t)$. 
\end{remark}

Note that the category $\widetilde{\Rep}(GL_t)$ has a natural 
strict symmetric monoidal structure. Namely, the tensor product functor is just the addition of pairs of integers
for objects and taking the union of planar tangles for morphisms, with the obvious symmetric braiding.
The unit object is the object $[0,0]$.  

\begin{definition} 
The category $\Rep(GL_t)$ 
is the Karoubian closure of $\widetilde{\Rep}(GL_t)$
(i.e., it is obtained from $\widetilde{\Rep}(GL_t)$ 
by adding the images of all the idempotent morphisms). 
\end{definition} 

Clearly, $\Rep(GL_t)$ is a Karoubian category 
(i.e.,  an idempotent-closed additive category) over $\Bbb C$, which inherits 
the tensor structure from $\widetilde{\Rep}(GL_t)$. Moreover, it is not hard to show that 
this category is rigid (with $[r,s]^*=[s,r]$). 
Moreover, it is easy check that $\dim [r,s]=t^{r+s}$;
this is just the interpolation of the equality $\dim(V^{\otimes r}\otimes V^{*\otimes s})=n^{r+s}$.   

\begin{theorem} (\cite{DM,De1,De2}) (i) 
The category $\Rep(GL_t)$ is a semisimple abelian symmetric 
tensor category if $t\notin \Bbb Z$.

(ii) The category $\Rep(GL_t)$ has the following universal property: 
if ${\mathcal C}$ is a rigid tensor category then 
isomorphism classes of (possibly non-faithful) symmetric tensor functors $\Rep(GL_t)\to {\mathcal C}$ 
are in bijection with isomorphism classes of objects $X\in {\mathcal C}$ of dimension $t$, via
$F\mapsto F([1,0])$. 

(iii) If $t=n\in \Bbb Z$, and if $p,q$ are nonnegative integers with $p-q=n$, 
then the category $\Rep(GL_{t=n})$ (which is not abelian)   
admits a non-faithful symmetric tensor functor $\Rep(GL_n)\to {\bf Rep}(GL_{p|q})$ to the representation category 
of the supergroup $GL_{p|q}$, which sends $[1,0]$ to the supervector space $V=\Bbb C^{p|q}$. 

(iv) We have a natural symmetric tensor functor ${\rm Res}: \Rep(GL_t)\to \Rep(GL_{t-1})$. 
\end{theorem}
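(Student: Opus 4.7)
My plan is to prove (ii) first, because it immediately yields both (iii) and (iv), and then to treat (i) separately as it is the only genuinely technical statement. The Deligne references do exactly this, but I will describe how I would organize the argument.

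For the universal property (ii), I would construct, for each object $X\in \mathcal{C}$ of dimension $t$, a symmetric tensor functor $F_X\colon \Rep(GL_t)\to \mathcal{C}$ on the skeleton $\widetilde{\Rep}(GL_t)$ by sending $[r,s]\mapsto X^{\otimes r}\otimes X^{*\otimes s}$ and interpreting each walled Brauer tangle as the corresponding composition of evaluations $\mathrm{ev}_X$, coevaluations $\mathrm{coev}_X$, and symmetric braidings available in any rigid symmetric monoidal category. The crucial compatibility check is that a closed loop, which in $\Rep(GL_t)$ contributes the scalar $t$, is sent to $\mathrm{ev}_X\circ\mathrm{coev}_X=\dim(X)=t$ in $\mathcal{C}$, so the composition rule matches. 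One then extends $F_X$ to the Karoubian closure in the standard way. Uniqueness is clear because $\Rep(GL_t)$ is, by construction, the Karoubi completion of the free rigid symmetric monoidal category generated by a single object of dimension $t$, so any symmetric tensor functor is determined up to isomorphism by its value on $[1,0]$. This shows $F\mapsto F([1,0])$ is a bijection on isomorphism classes.

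Parts (iii) and (iv) then follow painlessly. For (iii), the supervector space $\Bbb C^{p|q}$ is an object of ${\bf Rep}(GL_{p|q})$ of categorical dimension $p-q=n$, so (ii) produces a symmetric tensor functor $\Rep(GL_n)\to {\bf Rep}(GL_{p|q})$ sending $[1,0]\mapsto \Bbb C^{p|q}$; non-faithfulness is automatic since $\Rep(GL_n)$ is not abelian while the target is. For (iv), I take the object $X:=[1,0]\oplus \bold{1}\in \Rep(GL_{t-1})$, whose dimension is $(t-1)+1=t$; (ii) applied to this $X$ gives a symmetric tensor functor $\mathrm{Res}\colon \Rep(GL_t)\to \Rep(GL_{t-1})$, mirroring the classical branching $V_n|_{GL_{n-1}}\cong V_{n-1}\oplus \Bbb C$.

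The real work is (i). Since hom spaces in $\Rep(GL_t)$ are finite-dimensional (being spanned by finitely many walled Brauer diagrams), it suffices to prove that each endomorphism algebra $\End([r,s])=B_{r,s}(t)$ is semisimple: indecomposable summands then correspond to primitive idempotents, the category decomposes as a direct sum of blocks of the form $\mathrm{Mod}\text{-}\mathrm{End}(Y)$, and the resulting semisimple Karoubian category is automatically abelian. I would prove semisimplicity of $B_{r,s}(t)$ by a generic-parameter argument: $B_{r,s}(t)$ is a finite-dimensional algebra varying algebraically in $t$, and at every integer $n\geq r+s$ it coincides with $\End_{GL_n}(V^{\otimes r}\otimes V^{*\otimes s})$ by the Fundamental Theorem of Invariant Theory, hence is semisimple. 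Since semisimplicity is an open condition, expressed as nonvanishing of the discriminant of a symmetrizing trace form (the Markov trace obtained by closing all strands into loops, which takes values that are polynomials in $t$), it persists for all but finitely many values of $t$. The main obstacle — and the content of the Deligne--Milne/Deligne analysis — is then to show that the exceptional locus on which this discriminant vanishes is contained in $\Bbb Z$, which requires an explicit description of the zeros of the Markov trace form in terms of the combinatorics of bipartitions parametrizing simple modules in the classical semisimple picture.
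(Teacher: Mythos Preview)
Your approach matches the paper's: the paper does not prove this theorem at all but simply cites \cite{DM,De1,De2} and remarks that ``(iii) and (iv) are easy consequences of (ii),'' which is exactly the reduction you carry out (and your sketches of (i) and (ii) are in line with the cited references).

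One small gap: your justification of non-faithfulness in (iii) is not valid as stated. A faithful functor from a non-abelian Karoubian category to an abelian category is perfectly possible, so ``$\Rep(GL_n)$ is not abelian while the target is'' does not force non-faithfulness. The correct reason is concrete: for the supervector space $\Bbb C^{p|q}$ one has $S^\lambda(\Bbb C^{p|q})=0$ whenever the Young diagram $\lambda$ does not fit in the $(p,q)$-hook, so the corresponding nonzero idempotents in the walled Brauer algebra $B_{r,s}(n)$ are sent to zero in $\End_{GL_{p|q}}((\Bbb C^{p|q})^{\otimes r}\otimes (\Bbb C^{p|q})^{*\otimes s})$.
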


Note that (iii) and (iv) are easy consequences of (ii). 

Let's consider the case $t\notin \Bbb Z$. 
In this case, simple objects in $\Rep(GL_t)$ are labeled by pairs of arbitrary partitions, $(\lambda,\mu)$, 
$\lambda=(\lambda_1,...,\lambda_r)$, $\mu=(\mu_1,...,\mu_s)$. 
Namely, letting $V=[1,0]$ be the tautological object 
(the interpolation of the defining representation), 
we have simple objects $X_{\lambda,\mu}$ 
which are direct summands in $S^\lambda V\otimes S^\mu V^*$,
where $S^\lambda$ is the Schur functor corresponding to the partition 
$\lambda$. More specifically, $X_{\lambda,\mu}$ is the only direct summand 
in $S^\lambda V\otimes S^\mu V^*$ 
which does not occur in $S^{\lambda'}V\otimes S^{\mu'}V^*$ 
with $|\lambda'|<|\lambda|$. This summand occurs with multiplicity $1$. 
All of this is readily seen by noting that this is the case 
in ${\bf Rep}(GL_n)$ for large $n$, in which case 
$X_{\lambda,\mu}$ is the irreducible representation 
$V_{[\lambda,\mu]_n}$ of $GL_n$, with highest weight $[\lambda,\mu]_n$, where 
$$
[\lambda,\mu]_n=(\lambda_1,...,\lambda_r,0,...,0,-\mu_s,...,-\mu_1)
$$
(here, the string of zeros in the middle has length $n-r-s$).

Thus, we should think of $X_{\lambda,\mu}$ as the interpolation of 
the representation $V_{[\lambda,\mu]_n}$ to complex values of $n$; in particular, $X_{\lambda,\mu}^*=X_{\mu,\lambda}$.  
Consequently, the dimension of $X_{\lambda,\mu}$ is given by the interpolation of the 
Weyl dimension formula: 
\begin{equation}\label{dimfor}
\dim X_{\lambda,\mu}(t)=\\
d_\lambda d_\mu \prod_{i=1}^r\frac{\binom{t+\lambda_i-i-s}{\lambda_i}}{\binom{\lambda_i+r-i}{\lambda_i}}
\prod_{j=1}^s\frac{\binom{t+\mu_j-j-r}{\mu_j}}{\binom{\mu_j+s-j}{\mu_j}}
\prod_{i=1}^r\prod_{j=1}^s\frac{t+1+\lambda_i+\mu_j-i-j}{t+1-i-j}, 
\end{equation}
where 
$$
d_\lambda=\dim V_\lambda=\prod_{1\le i<j\le r}\frac{\lambda_i-\lambda_j+j-i}{j-i}
$$
is the dimension of 
the irreducible representation of $GL_{|\lambda|}$ with highest weight $\lambda$. 
Note that since this function takes integer values at large positive integer $t$, 
it is an integer-valued polynomial (a linear combination of binomial coefficients $\binom{t}{k}$). 

\subsection{The categories $\Rep(O_t)$ and $\Rep(Sp_{2t})$.}

The category $\Rep(O_t)$ is defined similarly to the category $\Rep(GL_t)$. 
Namely, recall that in the classical category ${\bf Rep}(O_n)$ we have the 
vector representation $V=\Bbb C^n$, and every irreducible representation 
of $O_n$ occurs in $V^{\otimes r}$ for some $r$. 
Now, 
$$
\Hom(V^{\otimes r_1},V^{\otimes r_2})=
(V^{\otimes r_1+r_2})^{O_n},
$$
so it is nonzero only if $r_1+r_2=2m$, in which case it can be written as $\End_{O_n}(V^{\otimes m})$ and 
is the image of the Brauer algebra $B_m(n)$, by the Fundamental Theorem of Invariant Theory
for orthogonal groups (this image is isomorphic to the Brauer algebra if $n\ge m$). 
The category ${\bf Rep}(O_n)$ can then be defined 
as the Karoubian closure of the subcategory 
with objects $[r]:=V^{\otimes r}$ 
and morphisms as above. 

Now consider composition of morphisms. 
A basis in the Brauer algebra $B_m(n)$ 
is formed by matchings of $2m$ points, 
so we have a spanning set in $\Hom(V^{\otimes r_1},V^{\otimes r_2})$
formed by unoriented planar tangles (with possibly intersecting strands) connecting $r_1$ points at the bottom and $r_2$ points at
the top, which define a perfect matching. Then composition is the concatenation of tangles, 
followed by removal of closed loops, so that each removed loop is replaced by a factor of $n$.  

Now, given $t\in \Bbb C$, one can define the category $\widetilde{\Rep}(O_t)$ 
with objects $[r]$, $r\in \Bbb Z_+$, 
and the space of morphisms 
$\Hom([r_1],[r_2])$ 
being spanned by planar tangles as above, with the same composition law 
as above, except that every removed closed loop earns a factor of $t$. 
Thus, for instance, the endomorphism algebra $\End([m])$ is the 
Brauer algebra $B_m(t)$. 

Similarly to $\widetilde{\Rep}(GL_t)$, the category $\widetilde{\Rep}(O_t)$ has a natural 
strict symmetric monoidal structure. Namely, the tensor product functor is just the addition of integers
for objects and taking the union of planar tangles for morphisms, with the obvious symmetric braiding.
The unit object is the object $[0]$.  

\begin{definition} 
The category $\Rep(O_t)$ 
is the Karoubian closure of $\widetilde{\Rep}(O_t)$.
\end{definition} 

Clearly, $\Rep(O_t)$ is a Karoubian category over $\Bbb C$, which inherits 
the tensor structure from $\widetilde{\Rep}(O_t)$. Moreover, it is not hard to show that 
this category is rigid (with $[r]^*=[r]$). 
Moreover, it is easy check that $\dim [r]=t^{r}$.   

The category $\Rep(Sp_{2t})$ is defined in a completely parallel way, 
starting from the representation category of the symplectic group $Sp_{2n}$. 
It is in fact easy to see that the categories $\Rep(O_t)$ and 
$\Rep(Sp_{-t})$ are equivalent as tensor categories, 
and differ only by a change of the commutativity isomorphism. 
Namely, define an involutive tensor automorphism $u$ of the identity functor 
of $\Rep(O_t)$ (called the parity automorphism) by $u|_{[r]}=(-1)^r$, 
and define a new commutativity isomorphism on $\Rep(O_t)$ 
which differs by sign from the old one if both factors are odd (i.e., $u=-1$ on them), 
and is the same as the old one if one of the factors is even (i.e., has $u=1$). Then it is easy to see that 
$\Rep(O_t)$ with this new commutativity is equivalent to $\Rep(Sp_{-t})$ 
as a symmetric tensor category.\footnote{There is a similar relationship between the categories $\Rep(GL_t)$ and $\Rep(GL_{-t})$.}  

\begin{theorem} (\cite{De1,De2}) (i) 
The category $\Rep(O_t)$ is a semisimple abelian symmetric 
tensor category if $t\notin \Bbb Z$.

(ii) The category $\Rep(O_t)$ (respectively, $\Rep(Sp_t)$) has the following universal property: 
if ${\mathcal C}$ is a rigid tensor category then isomorphism classes of (possibly non-faithful) 
symmetric tensor functors $\Rep(O_t)\to {\mathcal C}$ (respectively $\Rep(Sp_t)\to {\mathcal C}$)  
are in bijection with isomorphism classes of objects $X\in {\mathcal C}$ of dimension $t$ with a symmetric (respectively, skew-symmetric) isomorphism $X\to X^*$, via
$F\mapsto F([1])$. 

(iii) If $t=n\in \Bbb Z$, and if $p,q$ are nonnegative integers with $p-2q=n$, 
then the category $\Rep(O_{t=n})$ (which is not abelian)   
admits a non-faithful symmetric tensor functor $\Rep(O_n)\to {\bf Rep}(OSp_{p|2q})$ to the representation category 
of the supergroup $OSp_{p|2q}$, 
which sends $[1]$ to the supervector space $V=\Bbb C^{p|2q}$. 

(iv) We have a natural symmetric 
tensor functor ${\rm Res:} \Rep(O_t)\to \Rep(O_{t-1})$ and 
$\Rep(Sp_{2t})\to \Rep(Sp_{2t-2})$. 
\end{theorem}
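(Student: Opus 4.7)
My plan is to follow Deligne's strategy: prove the universal property (ii) directly from the skein presentation of $\Rep(O_t)$, then deduce (iii) and (iv) by applying it to concrete targets; part (i) reduces to a classical semisimplicity theorem for the Brauer algebra.

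\textbf{Universal property (ii).} By construction $\widetilde{\Rep}(O_t)$ is the free rigid strict symmetric monoidal category generated by a symmetrically self-dual object of dimension $t$: its morphism spaces are spanned by Brauer diagrams with composition by concatenation modulo the relation ``closed loop equals $t$''. Given $X \in \mathcal{C}$ of dimension $t$ with symmetric $\psi: X \to X^*$, I would define a strict tensor functor $\widetilde F: \widetilde{\Rep}(O_t) \to \mathcal{C}$ by $\widetilde F([r]) = X^{\otimes r}$, interpreting caps as $\mathrm{ev}_X \circ (\psi \otimes \mathrm{id}_X): X \otimes X \to \mathbf{1}$, cups as $(\mathrm{id}_X \otimes \psi^{-1}) \circ \mathrm{coev}_X: \mathbf{1} \to X \otimes X$, and crossings via the symmetric braiding of $\mathcal{C}$. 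The verifications are purely diagrammatic: symmetry of $\psi$ removes the sign ambiguity when crossings are slid past caps and cups (so the value of a tangle depends only on its planar isotopy type), and a closed loop evaluates to $\mathrm{ev}_X \circ \sigma \circ \mathrm{coev}_X = \dim X = t$, matching the Brauer skein relation. Since $\mathcal{C}$ is Karoubian, $\widetilde F$ extends to $F: \Rep(O_t) \to \mathcal{C}$. Conversely, any symmetric tensor functor $F$ yields $X = F([1])$ of dimension $t$ with the image of the canonical self-duality on $[1]$, and the two constructions are mutually inverse on isomorphism classes. The skew-symmetric case (for $\Rep(Sp_t)$) is parallel with an extra sign in $\psi$.

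\textbf{Semisimplicity (i).} It suffices to show that $\End_{\Rep(O_t)}([r]) = B_r(t)$, the Brauer algebra on $r$ strands, is semisimple over $\Bbb C$ for every $r$ when $t \notin \Bbb Z$. Endomorphism algebras of arbitrary objects of $\Rep(O_t)$ are then corners of matrix algebras over these $B_r(t)$, hence also semisimple; together with $\End(\mathbf{1}) = B_0(t) = \Bbb C$ and idempotent-splitting, this produces a semisimple abelian symmetric tensor category. Semisimplicity of $B_r(t)$ for $t \notin \Bbb Z$ is the classical theorem of Brown-Wenzl-Rui: the determinant of the Gram matrix of the standard Markov trace on Brauer diagrams is a polynomial in $t$ whose roots are all integers. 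This is the only substantive input of the whole proof.

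\textbf{Corollaries (iii) and (iv).} For (iii), apply (ii) to $\mathcal{C} = \mathbf{Rep}(OSp_{p|2q})$ with $X = \Bbb C^{p|2q}$: this supervector space has categorical dimension $p - 2q = n$ and carries a standard symmetric (super) self-duality. For (iv) in the orthogonal case, apply (ii) to $\mathcal{C} = \Rep(O_{t-1})$ with $X = [1] \oplus \mathbf{1}$, which has dimension $t$ and inherits a symmetric self-duality from the two summands; the symplectic case takes $X = [1] \oplus \mathbf{1}^{\oplus 2}$ with the standard skew form on the added trivial summands. The main obstacle throughout is the generic semisimplicity of the Brauer algebra used in (i); the diagrammatic checks in (ii) are tedious but routine once the symmetric (resp.\ skew-symmetric) self-duality of $X$ is in hand, and (iii)--(iv) are then purely formal applications of the universal property.
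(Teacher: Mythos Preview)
The paper does not actually prove this theorem: it is stated as a citation from \cite{De1,De2}, with the only added remark being ``Again, (iii) and (iv) follow from (ii).'' Your sketch is a correct reconstruction of the standard argument and is fully consistent with that remark---you derive (iii) and (iv) from the universal property exactly as the paper indicates one should.

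A couple of minor points. First, the semisimplicity of the Brauer algebra $B_r(t)$ for $t\notin\Bbb Z$ is due to Wenzl; the name ``Brown'' does not belong here, and Rui's contribution was the precise determination of the semisimplicity locus over all parameters and characteristics. Second, in (ii) your appeal to ``since ${\mathcal C}$ is Karoubian'' is implicitly using the paper's standing hypotheses on tensor categories (artinian, hence abelian, hence idempotent-complete); without some such assumption the extension from $\widetilde{\Rep}(O_t)$ to its Karoubian closure would not go through. Neither of these affects the correctness of your outline.
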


Again, (iii) and (iv) follow from (ii). 

Now assume $t\notin \Bbb Z$ and let us describe the simple objects. 
The simple objects $X_\lambda$ of $\Rep(O_t)$ are labelled by all partitions 
$\lambda=(\lambda_1,...,\lambda_r)$; namely, $X_\lambda$ is the 
unique direct summand in $S^\lambda V$ which does not occur in $S^{\lambda'}V$ for any $\lambda'$ with 
$|\lambda'|<|\lambda|$ (it occurs with multiplicity $1$). The object $X_\lambda$ is the interpolation 
of the representation $V_\lambda$ of $O_n$ with highest weight $\sum \lambda_i\omega_i$, where 
$\omega_i$ are the fundamental weights corresponding to the representation 
$\wedge^i V$. 

Thus, the dimension of $X_\lambda$ is given by the interpolation of the Weyl dimension formula: 
$$
\dim X_\lambda(t)=
$$
$$
\prod_{i=1}^r\frac{ (\frac{t}{2}+\lambda_i-i) \binom{\lambda_i+t-r-i-1}{\lambda_i} } { (\frac{t}{2}-i) \binom{\lambda_i+r-i}{\lambda_i} }
\prod_{1\le i<j\le r}\frac{(\lambda_i-\lambda_j+j-i)(\lambda_i+\lambda_j+t-i-j)}{(j-i)(t-i-j)}.
$$
Note that since this function takes integer values at large positive integer $t$, 
it is an integer-valued polynomial.

We will refer to $\Rep(GL_t)$, $\Rep(O_t)$, $\Rep(Sp_t)$ as {\it Deligne categories}. 
In this paper we will consider these categories only in the semisimple case $t\notin \Bbb Z$,
but many of our constructions can be extended to the general case.  

\subsection{Tensor subcategories} 

Proper tensor subcategories of the Deligne categories are easy to classify, since 
they are seen at the level of the Grothendieck ring. 

The category $\Rep(GL_t)$ is $\Bbb Z$-graded (by $\deg X_{\lambda,\mu}=|\lambda|-|\mu|$). 
So for every positive integer $N$ we have the subcategory $\Rep(GL_t/\Bbb Z_N)$ of representations of degrees 
divisible by $N$, and the subcategory $\Rep(PGL_t)$ of representations of degree zero. 

The categories $\Rep(O_t)$ and $\Rep(Sp_{2t})$ are $\Bbb Z_2$-graded, by $\deg(V)=1$, so 
we have the subcategories $\Rep(O_t/(\pm 1))$ and $\Rep(Sp_{2t}/(\pm 1))$
of even representations. 

It is easy to check that these are the only nontrivial tensor subcategories
of the Deligne categories. 

\subsection{The fundamental groups of Deligne categories}

Denote the fundamental groups of $\Rep(GL_t)$, $\Rep(PGL_t)$, $\Rep(O_t)$, $\Rep(Sp_{2t})$ 
by $GL_t$, $PGL_t$, $O_t$, $Sp_{2t}$, respectively. The following proposition provides 
an explicit description of these fundamental groups.

Recall that $V$ denotes the tautological object of the Deligne category. 

\begin{proposition}\label{expdes}
(i) $GL_t=GL(V)$, $PGL_t=PGL(V)$. 

(ii) $O_t=O(V)$, and $Sp_{2t}=Sp(V)$.   
\end{proposition}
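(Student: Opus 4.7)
The plan is to construct a canonical morphism $\phi: \pi := \pi(\Rep(GL_t)) \to GL(V)$ of affine group schemes in $\Rep(GL_t)$, and then prove it is an isomorphism by producing an equivalence $\Rep(GL_t) \xrightarrow{\sim} \mathbf{Rep}_{\Rep(GL_t)}(GL(V))$ via the universal property of the Deligne category and comparing it with the equivalence $H: \Rep(GL_t) \xrightarrow{\sim} \mathbf{Rep}_{\Rep(GL_t)}(\pi)$ coming from Theorem 8.17 applied to $F = \mathrm{id}$. The morphism $\phi$ itself comes from the universal property of $GL(V)$ (Definition \ref{clgroups}(i)): the tautological coaction of $O(\pi)$ on $V = [1,0]$ is an automorphism of $V$ over $\pi$ in the functor-of-points sense, hence factors through $GL(V)$.

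To get the inverse, I would apply the universal property of $\Rep(GL_t)$ in the other direction. Inside the rigid symmetric tensor category $\mathbf{Rep}_{\Rep(GL_t)}(GL(V))$, the object $V$ with its tautological $GL(V)$-action has dimension $t$, so there is a unique (up to isomorphism) symmetric tensor functor $G: \Rep(GL_t) \to \mathbf{Rep}_{\Rep(GL_t)}(GL(V))$ with $G(V) = V$ (endowed with this action). By the uniqueness part of the same universal property, the composite $U \circ G$ with the forgetful functor $U$ equals $\mathrm{id}_{\Rep(GL_t)}$, and the composite $\phi^* \circ G$ equals $H$, because both send $V$ to $V$ with the tautological $\pi$-action.

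It then suffices to show $G$ is an equivalence. Full faithfulness is immediate: $U$ is faithful (a morphism of comodules is determined by its underlying morphism) and $U \circ G = \mathrm{id}$. Essential surjectivity is the main obstacle, but it holds because $O(GL(V))$ is, by Definition \ref{clgroups}(i), a quotient of $S(V \otimes V^* \oplus V^* \otimes V)$; hence every $O(GL(V))$-comodule in $\Rep(GL_t)$ embeds into a sum of tensor products $V^{\otimes r} \otimes V^{*\otimes s}$ with their canonical $GL(V)$-action, which lie in the image of $G$, and the embedding splits by semisimplicity of $\Rep(GL_t)$ for $t \notin \Bbb Z$. Once $G$ is an equivalence, $\phi^*$ is an equivalence (since $\phi^* \circ G = H$), which forces $\phi$ to be an isomorphism.

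The $O_t$ and $Sp_{2t}$ cases proceed identically, replacing $GL(V)$ with $O(V)$ or $Sp(V)$, invoking the corresponding universal properties of $\Rep(O_t)$ or $\Rep(Sp_{2t})$, and using the (skew-)symmetric isomorphism $V \cong V^*$ to land in $\mathbf{Rep}_{\Rep(O_t)}(O(V))$ or $\mathbf{Rep}_{\Rep(Sp_{2t})}(Sp(V))$. The $PGL_t$ statement reduces to $GL_t$: the $\Bbb Z$-graded degree-zero subcategory $\Rep(PGL_t) \subset \Rep(GL_t)$ corresponds under the identification to comodules over $O(GL(V))_0 = O(PGL(V))$, so $\pi(\Rep(PGL_t)) \cong PGL(V)$.
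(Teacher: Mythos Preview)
Your approach is quite different from the paper's. The paper argues directly on the Hopf algebra side: since $\Rep(GL_t)$ is tensor-generated by $V$ and $V^*$, the fundamental group is a closed subscheme of $V\otimes V^*\oplus V^*\otimes V$; the defining relations of $GL(V)$ visibly hold there; and one then checks by a degree-by-degree computation (spelled out in the Example following the proposition) that these relations already suffice to cut down to $H_{\mathcal C}=\bigoplus_X X\otimes X^*$. Your argument instead builds an explicit inverse to $\phi^*$ via the universal property of the Deligne category, which is conceptually cleaner and avoids any explicit computation---provided it is complete.

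The gap is in essential surjectivity. Two points need more care. First, the fact that $O(GL(V))$ is a \emph{quotient} of $S(V^*\otimes V\oplus V\otimes V^*)$ only shows, via the coaction embedding $M\hookrightarrow M_{\rm triv}\otimes O(GL(V))$, that every comodule $M$ is a \emph{subquotient} of a sum of tensor powers, not a priori a subobject. Second, and more seriously, ``the embedding splits by semisimplicity of $\Rep(GL_t)$'' is not enough as stated: semisimplicity of the underlying category does not automatically produce a $GL(V)$-equivariant splitting (compare $\mathbf{Rep}_{\rm Vec}(\Bbb G_a)$, which is not semisimple even though ${\rm Vec}$ is). What you actually need is that objects in the image of $G$ are semisimple \emph{in} $\mathbf{Rep}_{\Rep(GL_t)}(GL(V))$. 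This does follow from your setup, but requires one more observation: since $U$ is exact and faithful it is conservative, so $G$ carries simple objects to simple objects; hence every $G(N')$ is a finite direct sum of simples in $\mathbf{Rep}(GL(V))$, and any subquotient of it is again such a direct sum and therefore lies in the essential image of $G$. With that step inserted, your argument closes and gives a genuine alternative to the paper's computation.
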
 

\begin{proof}
Since $\Rep(GL_t)$ is tensor-generated\footnote{A tensor category ${\mathcal{C}}$ is said to be tensor-generated by 
objects $X_1,...,X_m$ if any object of ${\mathcal{C}}$ is a subquotient of a direct sum of objects of the form 
$X_{i_1}\otimes...\otimes X_{i_n}$, $1\le i_1,...,i_n\le m$.} 
 by $V$ and $V^*$ 
and $\Rep(O_t)$, $\Rep(Sp_{2t})$ are tensor-generated by 
$V$, we find that $GL_t$ is a closed subscheme of $V\otimes V^*\oplus V^*\otimes V$, 
while $O_t$ and $Sp_{2t}$ are closed subschemes of $V\otimes V^*$. 
It's easy to see that the defining equations are satisfied on each of these subschemes, and 
one can check that they are sufficient (i.e., modulo these equations 
one already obtains the Hopf algebra $H=\oplus_X X\otimes X^*$).  
\end{proof} 

\begin{example}
Let us explain how this works in the example of $GL_t$. 
Let us work in $\Rep(GL_t)\boxtimes \Rep(GL_t)$. 
In this case, the algebra in question is 
$$
R:=S(V^*\boxtimes V\oplus V\boxtimes V^*)/(AB=BA={\rm Id}),
$$
where $V^*\boxtimes V$ corresponds to ``matrix elements of $A$'' and $V\boxtimes V^*$ to 
``matrix elements of $B$''. This algebra has a filtration by degree in $A$ and $B$. In degree $0$, we have just $\bold 1$. 
In degree $1$, we additionally have $V^*\boxtimes V$ and $V\boxtimes V^*$ corresponding to $A$ and $B$, respectively. 
In degree $2$, before imposing the relations, we additionally have 
$S^2(V^*\boxtimes V)\oplus (V^*\boxtimes V)\otimes (V\boxtimes V^*)\oplus S^2(V\boxtimes V^*)$. 
Note that $S^2(V^*\boxtimes V)=S^2V^*\boxtimes S^2V\oplus \wedge^2V^*\boxtimes \wedge^2V$. Now, the two relations 
$AB-{\rm Id}=0$ and $BA-{\rm Id}=0$ kill the two subobjects 
$\bold 1\boxtimes (V\otimes V^*)$ and $(V^*\otimes V)\boxtimes \bold 1$  
in $(V^*\boxtimes V)\otimes (V\boxtimes V^*)$ (intersecting by $\bold 1$, as $\Tr(AB)=\Tr(BA)$), 
which leaves us with ${\mathfrak{sl}}(V)\boxtimes {\mathfrak{sl}}(V)^*$. 
Thus, the additional summands in degree 2 are: 
$$
S^2V^*\boxtimes S^2V\oplus \wedge^2V^*\boxtimes \wedge^2V\oplus S^2V\boxtimes S^2V^*\oplus \wedge^2V\boxtimes \wedge^2V^*\oplus 
{\mathfrak{sl}}(V)\boxtimes {\mathfrak{sl}}(V)^*. 
$$
Similarly, one can show that in higher degrees $d>2$ we get one copy of $X\boxtimes X^*$ for each simple $X$ which occurs in $V^{\otimes r}\otimes V^{*\otimes s}$ 
with $r+s\le d$.   
\end{example} 

\begin{corollary} \label{liealg} 
We have ${\rm Lie}GL_t={\mathfrak{gl}}(V)$, ${\rm Lie}PGL_t={\mathfrak{sl}}(V)$, ${\rm Lie}O_t={\mathfrak{o}}(V)$, 
${\rm Lie}Sp_{2t}={\mathfrak{sp}}(V)$. 
\end{corollary}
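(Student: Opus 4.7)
The proof is essentially an immediate combination of two results already established in the excerpt, so the plan is very short.

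First, I would invoke Proposition \ref{expdes}, which identifies the fundamental groups of the Deligne categories with the concrete group schemes $GL(V)$, $PGL(V)$, $O(V)$, $Sp(V)$ built out of the tautological object $V$. This reduces the corollary to the purely internal Lie-theoretic computation of the Lie algebras of these group schemes inside the respective tensor category.

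Next, I would apply Proposition \ref{leaal} directly to obtain ${\rm Lie}\,GL(V) = \mathfrak{gl}(V)$, ${\rm Lie}\,O(V) = \mathfrak{o}(V)$, and ${\rm Lie}\,Sp(V) = \mathfrak{sp}(V)$. This handles three of the four claims with no additional work. For the remaining case of $PGL_t$, I would use the description of $O(PGL(V))$ as the degree-zero part $O(GL(V))_0$ under the $\mathbb{Z}$-grading in which $A$ has degree $1$ and $B$ has degree $-1$. Linearizing, the augmentation ideal of $O(PGL(V))$ is the degree-zero part of the augmentation ideal of $O(GL(V))$, and passing to $I/I^2$ identifies the cotangent space of $PGL(V)$ with the kernel of the evaluation morphism $V^*\otimes V\to \mathbf{1}$ (i.e., the traceless part). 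Dually, this yields ${\rm Lie}\,PGL(V) = \mathfrak{sl}(V)$, as already noted before Proposition \ref{leaal}.

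There is no real obstacle: the corollary is a direct consequence of Propositions \ref{expdes} and \ref{leaal}, together with the paragraph preceding Proposition \ref{leaal} identifying $\mathfrak{sl}(V)$ as the Lie algebra of $PGL(V)$. The only point worth double-checking is that the linearization argument producing Lie algebras from the defining equations of the group schemes in Definition \ref{clgroups} goes through verbatim in an arbitrary symmetric tensor category -- but this is precisely the content of Proposition \ref{leaal}, so one may simply cite it.
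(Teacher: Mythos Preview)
Your proposal is correct and matches the paper's own proof, which simply reads ``This follows from Proposition \ref{expdes} and Proposition \ref{leaal}.'' One tiny correction: the identification ${\rm Lie}\,PGL(V)=\mathfrak{sl}(V)$ is stated in the paragraph \emph{after} Proposition \ref{leaal}, not before it.
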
 

\begin{proof} This follows from Proposition \ref{expdes} and Proposition \ref{leaal}.  
\end{proof} 

We will denote these Lie algebras by ${\mathfrak{gl}}_t$, ${\mathfrak{sl}}_t$, ${\mathfrak{o}}_t$, 
${\mathfrak{sp}}_{2t}$. As we have shown above, they act naturally (i.e., functorially with respect to $M$) 
on every (ind-)object $M$ of the corresponding Deligne category. 

\subsection{Connectedness of $GL_t$, $PGL_t$, $O_t$, $Sp_{2t}$}

Let us denote any of the group schemes $GL_t$, $PGL_t$, $O_t$, $Sp_{2t}$ by $K$ and the corresponding Lie algebra by $\kk$. 

\begin{proposition}\label{conne} 
The group scheme $K$ is connected. 
\end{proposition}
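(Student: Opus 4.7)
By Proposition \ref{ff}, my plan is to show that the derivative functor $D$ is fully faithful. By semisimplicity of $\mathcal{C} := \Rep(K)$ (which holds for $t \notin \Bbb Z$), this reduces to verifying, for simple objects $X, Y \in \mathcal{C}$, that the $\kk$-invariants $(Y \otimes X^*)^\kk$ coincide with the $K$-invariants $(Y \otimes X^*)^K = \mathbf{1}^{\delta_{X,Y}}$.

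The key intermediate step is to introduce the full subcategory $\mathcal{C}^\kk \subseteq \mathcal{C}$ of $\kk$-trivial objects and show it is trivial (i.e.\ consists only of direct sums of $\mathbf{1}$). The Leibniz rule for the $\kk$-action on tensor products, the naturality of the dual action, and semisimplicity (giving closure under direct summands) together make $\mathcal{C}^\kk$ a tensor subcategory of $\mathcal{C}$. Invoking the classification of tensor subcategories of Deligne categories from Section 2.7, $\mathcal{C}^\kk$ must be either the trivial subcategory, all of $\mathcal{C}$, or one of the listed proper non-trivial subcategories.

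I would rule out the last two options by exhibiting explicit witnesses. The tautological object $V$ has $\kk$-action factoring through the evaluation $\id_V \otimes {\rm ev}_V : V \otimes V^* \otimes V \to V$ (upon identifying $\kk$ with its natural realization inside $V \otimes V^*$), which is non-zero by rigidity; hence $V \notin \mathcal{C}^\kk$ and $\mathcal{C}^\kk \neq \mathcal{C}$. For each listed proper candidate, I pick a witness: $V^{\otimes N} \in \Rep(GL_t/\Bbb Z_N)$ (the Leibniz summands act on distinct tensor slots and therefore do not cancel); $V \otimes V^* \supset \mathfrak{sl}(V) \in \Rep(PGL_t)$ (the adjoint action is non-zero because $\kk = \mathfrak{gl}(V)$ is non-abelian); and similarly $\wedge^2 V = \mathfrak{o}(V) \in \Rep(O_t/(\pm 1))$ and $S^2 V = \mathfrak{sp}(V) \in \Rep(Sp_{2t}/(\pm 1))$. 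Hence $\mathcal{C}^\kk$ is trivial.

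Once $\mathcal{C}^\kk$ is trivial, the conclusion follows cleanly: $(Y \otimes X^*)^\kk$ is a subobject of $Y \otimes X^*$ lying in $\mathcal{C}^\kk$, so it is a direct sum of copies of $\mathbf{1}$; but the multiplicity of $\mathbf{1}$ in $Y \otimes X^*$ is exactly $\delta_{X,Y}$ (since $X, Y$ are simple), so this subobject equals $\mathbf{1}^{\delta_{X,Y}} = (Y \otimes X^*)^K$. This gives fully faithfulness of $D$ on simples and, by semisimplicity, in general, so $K$ is connected. The main obstacle lies in the classification step: checking case by case that each proper candidate subcategory from Section 2.7 contains an object on which $\kk$ acts non-trivially, which itself boils down to verifying non-vanishing of specific natural morphisms built from evaluation, coevaluation, and the Lie bracket.
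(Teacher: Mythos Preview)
Your argument is correct, and its logical core coincides with the paper's: both hinge on the claim that $\kk$ acts nontrivially on every nontrivial simple object of $\Rep(K)$. The paper simply asserts this (``it is easy to see'') and then concludes directly from the definition of connectedness, using the decomposition $O(K)=\bigoplus_X X\otimes X^*$: for each nontrivial simple $X$, the left-translation action of $\kk$ is nontrivial on the $X$-factor, so since $X$ is simple we get $X^\kk=0$, hence $O(K)^\kk=\mathbf{1}$.

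Your route differs in two ways. First, you pass through Proposition~\ref{ff} (full faithfulness of $D$) rather than working directly with $O(K)^\kk$; this is harmless but unnecessary, since the proof of Proposition~\ref{ff} already shows the two conditions are equivalent via exactly the computation $O(K)^\kk=\mathbf{1}$. Second, and more substantively, you supply an actual argument for the key claim by observing that the subcategory $\mathcal{C}^\kk$ of $\kk$-trivial objects is a tensor subcategory and then invoking the classification of Section~2.7. This is a nice structural argument, and it does give a clean reduction: once you exhibit a single object with nontrivial $\kk$-action inside each candidate proper subcategory (and $\mathfrak{sl}(V)$, respectively $\kk$ itself with the adjoint action, already suffices in every case, making the $V^{\otimes N}$ witnesses redundant), you are done. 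The trade-off is that Section~2.7's classification is itself stated without proof, so you are exchanging one ``easy to see'' for another; the paper's direct assertion could equally well be justified by interpolation from large integer rank.
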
 

\begin{remark}
Note that if $n$ is a positive integer then the group $O_n$ is not connected. However, this is due to the 
existence of the determinant character for $O_n$, which does not exist for $O_t$.  
\end{remark} 

\begin{proof}
Let $X$ be a nontrivial simple object of $\Rep(K)$. Then it is easy to see that $\kk$ acts nontrivially on $X$.
This implies that $O(K)^\kk=\bold 1$, as desired. 
\end{proof} 

\begin{remark}
On the contrary, it is easy to show that the group scheme $S_t$ in $\Rep(S_t)$ (defined in \cite{De2}) is "totally disconnected" 
in the sense that ${\rm Lie}(S_t)=0$. 
\end{remark} 

\section{Interpolation of the representation theory of real
reductive groups}

\subsection{Interpolation of classical symmetric pairs} 

Let $(\g,\kk)$ be a symmetric pair, i.e. $\g$ is a complex
reductive Lie algebra, and $\kk$ the fixed subalgebra 
of an involution $\theta: \g\to \g$. Let $K$ be a reductive group
whose Lie algebra is $\kk$. The main algebraic objects of study 
in the representation theory of real reductive groups 
are $(\g,K)$-modules. These, by definition, are locally algebraic $K$-modules with a
compatible action of $\g$. The category of such modules will be denoted by $\Rep(\g,K)$. 

We would like to define the interpolation of the category 
$\Rep(\g,K)$ to complex rank in the case when the Lie algebra $\g$ is 
of classical type. To do so, let us give a ``categorically
friendly'' formulation of the additional structure on a locally
algebraic $K$-module $M$ that gives it a compatible $\g$-action. 

To this end, note that $\g=\kk\oplus \p$, where $\p$ is the
$-1$-eigenspace of $\theta$ (which is a $K$-module), and we have a bracket map 
$$
\eta: \wedge^2\p\to \kk,
$$
which is a morphism of $K$-modules. 
Then, a structure of a $(\g,K)$-module on a $K$-module $M$ is
just a morphism of $K$-modules 
$$
b: \p\otimes M\to M
$$
such that 
\begin{equation} \label{commrel}
b\circ ({\rm Id}_\p \otimes b)=a_M\circ (\eta\otimes {\rm Id}_M),
\end{equation}
as morphisms $\wedge^2\p\otimes M\to M$
(where on the left hand side we regard $\wedge^2\p$ as a subobject of $\p\otimes \p$). 

So for each symmetric pair with classical $\g$ (and hence $\kk$) we can define 
the interpolation of its category of $(\g,K)$-modules as the 
category of ind-objects $M$ of the Deligne category interpolating the category ${\bf Rep}(K)$
with a morphism $b: \p\otimes M\to M$ satisfying
(\ref{commrel}). The only thing we have to do for this is to
define the appropriate object $\p$ with the morphism $\eta$. 

Let us explain how this works in examples, following the classification of symmetric spaces (\cite{He}). 

\begin{example}\label{gt} Group type (the symmetric pair $(K\times K,K)$). 
This example works in any of the Deligne categories 
$\Rep(GL_t)$, $\Rep(O_t)$, $\Rep(Sp_{2t})$.
Namely, $\p=\kk$, and the map $\eta: \wedge^2\p\to \kk$ 
is the commutator. In other words, in this case a $(\g,K)$-module 
is simply a $\kk$-module in the Deligne category 
(i.e., an (ind-)object $M$ of the Deligne category with a $\kk$-action, which does not necessarily 
coincide with the natural action of $\kk$ on $M$). 
We denote the category of such modules by  
$\Rep(\kk_t\times \kk_t, K_t)$ for the corresponding $\kk=\kk_t={\mathfrak{gl}}_t,{\mathfrak{o}}_t,{\mathfrak{sp}}_t$. 
\end{example}

\begin{example}\label{AI} Type AI (the symmetric pair $(GL_n,O_n)$). 
The appropriate Deligne category is 
$\Rep(O_t)$, and $\p=S^2V$, with the bracket 
$$
\eta: \wedge^2\p=\wedge^2(S^2V)\to \kk=\wedge^2V
$$
given by the formula
$$
\eta={\rm Id}_V\otimes (,)\otimes {\rm Id}_V.
$$
We denote the resulting category of $(\g,K)$-modules by 
$\Rep({\mathfrak{gl}}_t,O_t)$. 
\end{example}

\begin{example}\label{AII} Type AII (the symmetric pair $(GL_{2n},Sp_{2n})$). 
The appropriate Deligne category is 
$\Rep(Sp_{2t})$, and $\p=\wedge^2V$, with the bracket 
$$
\eta: \wedge^2\p=\wedge^2(\wedge^2V)\to \kk=S^2V
$$
given by the same formula as in Example \ref{AI}. 
We denote the resulting category of $(\g,K)$-modules by 
$\Rep({\mathfrak{gl}}_{2t},Sp_{2t})$. 
\end{example}

\begin{example}\label{AIII} Type AIII (the symmetric pair $(GL_{n+m},GL_n\times
GL_m$)). 
The appropriate Deligne category is 
$\Rep(GL_t)\boxtimes \Rep(GL_s)$ 
(so we have two complex parameters).
Let $V$ and $U$ be the tautological objects 
of these two categories. 
Then $\p=V\otimes U^*\oplus U\otimes V^*$, with the bracket 
$$
\eta: \wedge^2\p\to \kk=V\otimes V^*\oplus
U\otimes U^*
$$
given by the formula
$$
\eta=({\rm Id}_V\otimes {\rm ev}_U\otimes {\rm Id}_{V^*})\circ (p_1\otimes p_2)
-({\rm Id}_U\otimes {\rm ev}_V\otimes {\rm Id}_{U^*})\circ (p_2\otimes p_1),
$$
where $p_1,p_2$ are the projections to the first and
second summand of $\p$. We denote the resulting category by 
$\Rep({\mathfrak{gl}}_{t+s},GL_t\times GL_s)$. 
\end{example}

\begin{example}\label{BDI} Type BDI (the symmetric pair ($O_{n+m},O_n\times
O_m$)). The appropriate Deligne category is 
$\Rep(O_t)\boxtimes \Rep(O_s)$. 
Let $V$ and $U$ be the tautological objects 
of these two categories. 
Then $\p=V\otimes U$, with the bracket 
$$
\eta: \wedge^2\p\to \kk=\wedge^2 V\oplus
\wedge^2 U
$$
given by the formula
$$
\eta=({\rm Id}_V\otimes (,)_U\otimes {\rm Id}_V)\circ \sigma_{34} 
-({\rm Id}_U\otimes (,)_V\otimes {\rm Id}_U)\circ \sigma_{12}
$$
We denote the resulting category by 
$\Rep({\mathfrak o}_{t+s},O_t\times O_s)$. 
\end{example}

\begin{example}\label{CII} Type CII (the symmetric pair $(Sp_{2(n+m)},Sp_{2n}\times
Sp_{2m})$). The appropriate Deligne category is 
$\Rep(Sp_{2t})\boxtimes \Rep(Sp_{2s})$. 
Let $V$ and $U$ be the tautological objects 
of these two categories. 
Then $\p=V\otimes U$, with the bracket 
$$
\eta: \wedge^2\p\to \kk=S^2 V\oplus
S^2 U
$$
given by the same formula as in Example \ref{BDI}.
We denote the resulting category by 
$\Rep({\mathfrak{sp}}_{2(t+s)},Sp_{2t}\times Sp_{2s})$. 
\end{example}

\begin{remark}
Note that in the last three examples, 
one can freeze one of the parameters ($t$ or $s$) to 
be a positive integer (i.e., use the usual representation category of the corresponding Lie group, rather than the Deligne category), 
and consider the interpolation only with
respect to the other parameter.
\end{remark}

\begin{example} Type DIII (the symmetric pair $(O_{2n},GL_n)$). 
The appropriate Deligne category is 
$\Rep(GL_t)$, and $\p=\wedge^2V\oplus \wedge^2V^*$, 
with the bracket 
$$
\eta: \wedge^2\p=\wedge^2(\wedge^2V\oplus \wedge^2V^*)\to
\kk=V\otimes V^*
$$
given by the formula
$$
\eta=({\id}_V\otimes (,)\otimes {\rm Id}_{V^*})\circ P,
$$
where $P: \wedge^2(\wedge^2V\oplus \wedge^2V^*)\to \wedge^2 V\otimes \wedge^2V^*$ is the projection. 
We denote the resulting category by 
$\Rep({\mathfrak o}_{2t},GL_t)$. 
\end{example}

\begin{example} Type CI (the symmetric pair $(Sp_{2n},GL_n)$). 
The appropriate Deligne category is 
$\Rep(GL_t)$, and $\p=S^2V\oplus S^2V^*$, 
with the bracket 
$$
\eta: \wedge^2\p=\wedge^2(S^2V\oplus S^2V^*)\to
\kk=V\otimes V^*
$$
given by the formula
$$
\eta=({\id}_V\otimes (,)\otimes {\rm Id}_{V^*})\circ P,
$$
where $P: \wedge^2(S^2V\oplus S^2V^*)\to S^2 V\otimes S^2V^*$ is the projection. 
We denote the resulting category by 
$\Rep({\mathfrak{sp}}_{2t},GL_t)$. 
\end{example}

Note that all the above complex rank categories $\Rep(\g,K)$ can be defined using a slightly different language. 
Namely, we have a Lie algebra $\g=\kk\oplus \p$ in $\Rep(K)$, whose commutator 
is composed of the usual commutator on $\kk$, the action of $\kk$ on $\p$, and the map $\eta$, 
and $\Rep(\g,K)$ is nothing but the category of $\g$-modules $M$ in ${\rm Ind}\Rep(K)$, such that 
the restriction of the $\g$-action on $M$ to $\kk$ coincides with the natural action of $\kk$ on $M$. 

For example, in the group type case (Example \ref{gt}), we have $\g=\kk\oplus \kk$, and $\kk$ 
sits in $\g$ as the diagonal subalgebra. Thus, the objects of $\Rep(\kk\oplus \kk,K)$
can be viewed as $\kk$-bimodules in $\Rep(K)$ such that the diagonal $\kk$-action is the natural one. 

In fact, the above definition becomes more natural in light of the following construction, 
which also provides examples of finite dimensional $(\g,K)$-modules.

\begin{example}\label{fdhc} 
1. Consider the setting of Example \ref{gt}. It is easy to see that we have a symmetric tensor functor 
$F: \Rep(K)\boxtimes \Rep(K)\to \Rep(\kk\oplus \kk,K)$ given by $X\boxtimes Y\mapsto X\otimes Y$.
The additional action of $\kk$ on $X\otimes Y$ is just the action on the left component, while the natural action of $\kk$ is the 
diagonal one. Thus, the two actions coincide iff $Y$ is a multiple of $\bold 1$. 

2. In the non-group type examples above, by the universal property of Deligne categories, 
we have a symmetric tensor functor $F: \Rep(G)\to \Rep(\g,K)$, where $\Rep(G)$ 
is the Deligne category corresponding to $\g=\g_t$ in each of the cases (i.e. $G=GL_t$ 
if $\g={\mathfrak{gl}}_t$, etc.) 
 \end{example} 

\begin{proposition} \label{ff1}
The functor $F$ of Example \ref{fdhc} is fully faithful. 
\end{proposition}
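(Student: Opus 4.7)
The plan is to unify both parts of Example \ref{fdhc} by means of connectedness of the relevant group schemes, combined with Proposition \ref{ff}. In each case let $\mathcal C_0$ denote the ambient Deligne category (so $\mathcal C_0=\Rep(O_t)$ in type AI, and $\mathcal C_0=\Rep(K)$ in the group type case). The common idea is to identify the Hom spaces on both sides of $F$ with the space of morphisms in $\Ind\mathcal C_0$ commuting with a suitable Lie algebra action.

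For the non-group type cases, I would apply the Tannakian equivalence (\cite{De1}, Theorem 8.17) to the symmetric tensor functor $\Rep(G)\to\mathcal C_0$ underlying $F$, to obtain an equivalence $\Rep(G)\cong \Rep_{\mathcal C_0}(H)$, where $H$ is the image of the fundamental group $\pi(\Rep(G))=G$ (by Proposition \ref{expdes}) under this functor. Since connectedness is preserved by symmetric tensor functors, Proposition \ref{conne} implies that $H$ is connected in $\mathcal C_0$, and its Lie algebra is $\g$ by Proposition \ref{leaal}. Applying Proposition \ref{ff} to $H$, morphisms in $\Rep_{\mathcal C_0}(H)$ are exactly morphisms in $\Ind\mathcal C_0$ commuting with the $\g$-action. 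On the other side, morphisms in $\Rep(\g,K)$ are also morphisms in $\Ind\mathcal C_0$ commuting with the $\g$-action: the additional $K$-equivariance condition follows from $\kk$-equivariance (by Proposition \ref{ff} applied to the connected $K$), which is already built into $\g$-equivariance since $\kk\subset\g$.

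For the group type case, since $t\notin\ZZ$ the category $\Rep(K)\boxtimes\Rep(K)$ is semisimple with simples $X\boxtimes Y$, so it suffices to check fully faithfulness on simples; here the source Hom space equals $\Hom_{\Rep(K)}(X,X')\otimes \Hom_{\Rep(K)}(Y,Y')$. On the target side, $\Hom_{\g,K}(X\otimes Y,X'\otimes Y')$ with $\g=\kk\oplus\kk$ and $K$ diagonal consists of morphisms in $\mathcal C_0=\Rep(K)$ commuting with both independent $\kk$-actions ($K$-equivariance is again automatic by connectedness). Using rigidity, this identifies with $\Hom_{\mathcal C_0}(\bold 1,(X^*\otimes X'\otimes Y^*\otimes Y')^{\kk\oplus\kk})$. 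Because the two $\kk$-actions act on disjoint tensor factors, the invariant subobject factors as $(X^*\otimes X')^\kk\otimes (Y^*\otimes Y')^\kk$; and by Proposition \ref{ff} applied to the connected $K$ together with Schur's lemma in the semisimple $\mathcal C_0$, each such factor equals $\bold 1$ if the corresponding simples are isomorphic and $0$ otherwise, giving the same result as the source.

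The main obstacle is categorical bookkeeping — keeping track of two different levels (the abstract Deligne category $\Rep(G)$ versus the image group scheme $H\subset\mathcal C_0$) and transferring connectedness correctly — together with the verification that Lie-algebra invariants of a tensor product, where the two Lie algebras act on disjoint factors, really do factor as a tensor product of the respective invariants (a formal consequence of the exactness of tensoring with a trivial $\kk$-module and the description of $\kk$-invariants via the kernel of $\zeta$). Once this is set up, both cases reduce cleanly to applications of Proposition \ref{ff} combined with either Tannakian duality or a direct invariant computation.
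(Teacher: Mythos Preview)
Your proposal is correct and unpacks exactly what the paper's one-line proof (``follows from Propositions \ref{ff} and \ref{conne}'') leaves implicit: transfer $G$ to a connected group scheme in $\Rep(K)$ via the Tannakian equivalence, then apply Proposition \ref{ff}. Your direct invariant computation in the group-type case is fine, but note that it could be absorbed into the same argument by observing that $\pi(\Rep(K)\boxtimes\Rep(K))=K\times K$ is again connected; this gives a uniform proof for both parts.
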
 

\begin{proof}
This follows from Propositions \ref{ff} and \ref{conne}. 
\end{proof} 

\subsection{The center of $U(\g)$.}

A fundamental role in the representation theory of real reductive groups is played by the center $Z(\g)$ of the enveloping algebra $U(\g)$. 
So, let us discuss the structure of this center in our setting. 

Note that $U(\g)$ is a bimodule over the ordinary algebra $U(\g)^\kk=\Hom(\bold 1,U(\g))$. 
By definition, $Z(\g)$ is the subalgebra of $U(\g)^\kk$ 
consisting of elements 
$z$ whose left and right action on $U(\g)$ are the same. 
In particular, $Z(\g)$ is an ordinary commutative algebra (over $\Bbb C$). 
Since we are in characteristic zero, we have a symmetrization map $S\g\to U(\g)$, which is a map of $\g$-modules. 
Hence, the center of $U(\g)$ is identified, as a vector space, with $(S\g)^\g$, and thus we have that ${\rm gr}Z(\g)=
(S\g)^\g$ (where we take the associated graded algebra under the usual PBW filtration on the enveloping algebra).   

\begin{proposition}\label{cent} (i) The algebra $(S\kk)^\kk=\Hom(\bold 1,S\kk)$
 is a polynomial algebra in infinitely many homogeneous generators $C_i$,
 of  degrees $i=1,2,3,...$ if $\kk={\mathfrak{gl}}_t$ 
and degrees $i=2,4,6,...$ if $\kk={\mathfrak{o}}_t$ or ${\mathfrak{sp}}_{2t}$.
 So in Example \ref{gt}, $(S\g)^\g$ is a polynomial algebra in two strings of such generators,
 $C_i^{\rm left}$ and $C_i^{\rm right}$. 
 
(ii) In all the other examples, 
the algebra $(S\g)^\g$ is a polynomial algebra 
in infinitely many homogeneous generators $C_i$ of degrees $i=1,2,3,...$ if $\g$ is of type ${\mathfrak{gl}}$ 
and degrees $i=2,4,6,...$ if $\g$ is of type ${\mathfrak{o}}$ or ${\mathfrak{sp}}$. 

(iii) The center $Z(\g)$ is a polynomial algebra, whose generators are obtained from the generators of $(S\g)^\g$ by the symmetrization map. 
\end{proposition}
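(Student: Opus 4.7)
The plan is to separate the statement into three pieces: construct the generators $C_i$ explicitly in (i), reduce (ii) to (i), and derive (iii) from the PBW identification ${\rm gr}\, Z(\g) = (S\g)^\g$ via the filtered-to-graded lifting principle. For (i), I would first construct the putative generators categorically. For $\kk = {\mathfrak{gl}}(V) = V \otimes V^*$, iterate the coevaluation and reshuffle the tensor factors so that adjacent $V^*$ and $V$ pair by ${\rm ev}_V$ to obtain a cyclic-trace morphism $\tau_i : \bold 1 \to \kk^{\otimes i}$, the categorical analog of $X \mapsto \Tr(X^i)$. Composing $\tau_i$ with the symmetrization projector $\kk^{\otimes i} \to S^i \kk$ yields $C_i \in \Hom(\bold 1, S^i\kk) = (S^i\kk)^\kk$. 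For $\kk = {\mathfrak{o}}(V)$ or ${\mathfrak{sp}}(V)$, use the isomorphism $\psi : V \to V^*$ to embed $\kk$ inside $V \otimes V^*$ and apply the same construction; the involution $\theta = -\sigma(\psi \otimes \psi^{-1})$ from Section 2.3 restricts to $+1$ on $\kk$ but acts on $\tau_i$ as $(-1)^i$, so $C_i$ vanishes for odd $i$ and survives for even $i$.

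Next I would pin down $\dim \Hom(\bold 1, S^n\kk)$ via a categorical Cauchy-type identity. For $\kk = {\mathfrak{gl}}(V)$, the decomposition $S^n(V \otimes V^*) = \bigoplus_{\lambda \vdash n} S^\lambda V \otimes S^\lambda V^*$ lifts verbatim to $\Rep(GL_t)$ when $t \notin \ZZ$, since in the semisimple range the multiplicities of simple objects in $V^{\otimes r} \otimes V^{*\otimes s}$ are $t$-independent and match the classical ones for large integer $n$. Simplicity of $S^\lambda V$ then gives $\Hom(\bold 1, S^\lambda V \otimes S^\lambda V^*) = \CC$, whence $\dim \Hom(\bold 1, S^n \kk) = p(n)$, the number of partitions of $n$. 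The analogous Littlewood identities in $\Rep(O_t)$ and $\Rep(Sp_{2t})$ show that $\dim \Hom(\bold 1, S^n\kk)$ equals the number of partitions of $n$ into even parts. In each case this matches the Hilbert series of the claimed polynomial ring in the $C_i$. Since the $C_i$ produce a degree-preserving algebra homomorphism from this free polynomial ring into $(S\kk)^\kk$, comparison of graded dimensions shows it is an isomorphism, proving (i).

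For (ii), the reduction is direct: in the group-type case $\g = \kk \oplus \kk$, so the adjoint action of the two factors on $S\g = S\kk \otimes S\kk$ gives $(S\g)^\g = (S\kk)^\kk \otimes (S\kk)^\kk$, which accounts for the two strings of generators. In every other example $\g$ is itself one of ${\mathfrak{gl}}_t$, ${\mathfrak{o}}_t$, ${\mathfrak{sp}}_{2t}$ (possibly at a shifted parameter such as $t+s$), so (ii) is just (i) applied to $\g$ in place of $\kk$. For (iii), the symmetrization map $S\g \to U(\g)$ is a $\g$-module isomorphism in characteristic zero, whence ${\rm gr}\, Z(\g) = (S\g)^\g$ as the paper already noted. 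Lifting each polynomial generator $C_i$ of $(S\g)^\g$ to its symmetrization $\widetilde C_i \in Z(\g)$ produces algebraically independent central elements (their symbols in the associated graded are the free generators $C_i$), and the filtered algebra homomorphism $\CC[\widetilde C_i] \to Z(\g)$ has $\CC[C_i] \xrightarrow{\sim} {\rm gr}\, Z(\g)$ as its associated graded, hence is itself an isomorphism.

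The main obstacle is establishing the Cauchy--Littlewood decomposition of $S^n\kk$ inside the Deligne category rather than merely classically. In our setting one must verify that the decomposition interpolates to non-integer $t$; this should follow from the general fact, recorded in the review of simple objects preceding the statement, that the simple-summand multiplicities in $V^{\otimes r} \otimes V^{*\otimes s}$ (resp. $V^{\otimes r}$) are stable throughout the semisimple range $t \notin \ZZ$ and agree with their classical counterparts for $n$ large.
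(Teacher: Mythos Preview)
Your treatment of (i) and (iii) is essentially the paper's argument made explicit: the paper just says (i) ``follows from classical invariant theory by looking at large integer $t$'' after invoking connectedness of $K$ to identify $(S\kk)^\kk$ with $\Hom_{\Rep(K)}(\bold 1,S\kk)$, and your Cauchy/Littlewood count is one way to carry that out. One small point worth tightening: equality of Hilbert series alone does not yield that $\Bbb C[C_i]\to (S\kk)^\kk$ is an isomorphism; you still need either algebraic independence of the $C_i$ or generation. Both follow from the same interpolation/stability principle you invoke for the decomposition, but you should say so.

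There is, however, a genuine gap in your argument for (ii). You assert that since ``$\g$ is itself one of $\mathfrak{gl}_t,\mathfrak o_t,\mathfrak{sp}_{2t}$ at a shifted parameter, (ii) is just (i) applied to $\g$ in place of $\kk$.'' But your proof of (i) computes $\Hom_{\Rep(K)}(\bold 1,S\kk)$, i.e.\ $K$-invariants in the category $\Rep(K)$. In (ii), $\g$ lives in $\Rep(K)$ for the \emph{smaller} group $K$, and $(S\g)^\g$ means $\g$-invariants there; this is strictly smaller than $\Hom_{\Rep(K)}(\bold 1,S\g)$, which only records $\kk$-invariants. (For instance, in type AIII with $K=GL_t\times GL_s$, already in degree~$1$ one has two $\kk$-invariants $\Tr_V A$ and $\Tr_U A$ but only one $\g$-invariant $\Tr_{V\oplus U}A$.) To reduce to (i) you must pass to the larger Deligne category $\Rep(G)$ where $\g=\mathrm{Lie}\,G$, and then justify that $(S\g)^\g$ computed in $\Rep(K)$ coincides with $(S\g)^G$ computed in $\Rep(G)$. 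This identification is exactly Proposition~\ref{ff1} (full faithfulness of $F:\Rep(G)\to\Rep(\g,K)$), which the paper invokes explicitly at this step; your argument is missing it.
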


\begin{proof}
(i) Since $K$ is connected by Proposition \ref{conne}, $(S\kk)^\kk=(S\kk)^K$, so this is just a calculation of invariants in 
the Deligne category $\Rep(K)$. Thus, the statement follows from classical invariant theory by looking at large integer $t$. 
Namely, identifying $\g$ with $\g^*$, the generators of $(S\g)^\g=(S\g^*)^\g$ may be written as $C_i=\Tr(A^i)$. 

(ii) By Proposition \ref{ff1}, the algebra $(S\g)^\g$ may be computed in the Deligne category $\Rep(G)$ as 
the algebra $(S\g)^G$. Thus, (ii) follows from (i). 

(iii) follows from (i),(ii) and the fact that ${\rm gr}Z(\g)=(S\g)^\g$.  
\end{proof} 

In fact, we can generalize Proposition \ref{cent}(i) to the algebra $((S\g)^{\otimes m})^\g$. 

\begin{proposition}\label{multiinv}
(i) If $G=GL_t$ then $((S\g)^{\otimes m})^\g$ is the polynomial algebra in the generators $C_w$ 
labeled by cyclic words (=necklaces) $w$ in letters $A_1,...,A_m$ (namely, 
$C_w$ is the interpolation of $\Tr(w)$). The degree of $C_w$ 
is the length of $w$. 

(ii) If $G=O_t$ or $Sp_{2t}$ then $((S\g)^{\otimes m})^\g$ is the polynomial algebra in the generators $C_w$ 
labeled by cyclic words $w$ in letters $A_1,...,A_m$ modulo reversal, except palindromic words $w$ of odd length (namely, 
$C_w$ is the interpolation of $\Tr(w)$). The degree of $C_w$ 
is the length of $w$.
\end{proposition}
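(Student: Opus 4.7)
The plan follows exactly the strategy of Proposition \ref{cent}: reduce $\g$-invariants to $G$-invariants using connectedness, then compute by interpolation from classical invariant theory at large integer $t$.

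First, by Propositions \ref{ff} and \ref{conne}, connectedness of $G$ yields $((S\g)^{\otimes m})^\g = ((S\g)^{\otimes m})^G$. Decomposing by multi-degree $(d_1,\ldots,d_m)$, each graded piece $((S^{d_1}\g)\otimes\cdots\otimes(S^{d_m}\g))^G$ is a finite-dimensional $\Hom$ space in $\Rep(G)$ whose dimension, being a polynomial in $t$ of bounded degree, coincides with its value in ${\bf Rep}(G_n)$ for all sufficiently large positive integers $n$. Moreover, the morphisms themselves are built out of universal natural operations (evaluation, coevaluation, permutations, symmetrizers), so each interpolates uniquely across $t$.

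Second, Procesi's First Fundamental Theorem for $GL_n$ acting on $\mathfrak{gl}_n^{\oplus m}$ (for $n$ large compared to $d_1+\cdots+d_m$) shows that the classical invariant ring is a polynomial algebra generated by traces $\Tr(A_{i_1}\cdots A_{i_k})$, indexed up to cyclic rotation by necklaces in $A_1,\ldots,A_m$. For $O_n$ and $Sp_{2n}$, the analogous theorem gives the same traces as generators, but the skew-adjointness $A_i^* = -A_i$ of Lie algebra elements with respect to the invariant form yields
$$
\Tr(A_{i_1}\cdots A_{i_k}) = \Tr((A_{i_1}\cdots A_{i_k})^*) = (-1)^k\Tr(A_{i_k}\cdots A_{i_1}),
$$
so each necklace is identified with its reversal (up to a sign), and palindromic necklaces of odd length vanish. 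In the stable range the remaining traces are algebraically independent.

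Third, each classical trace is manifestly a composition of structure morphisms that makes sense in the Deligne category for any $t$; define $C_w\colon \bold 1\to (S\g)^{\otimes m}$ to be the corresponding morphism in $\Rep(G_t)$. By construction $C_w$ specializes to $\Tr(w)$ at large integer $t=n$. Since the $C_w$ listed in the proposition specialize to an algebraically independent generating set of the classical invariant algebra, and since the multi-graded dimensions of $((S\g)^{\otimes m})^G$ agree with those of the classical invariant ring for large $n$, the $C_w$ generate a polynomial subalgebra that exhausts $((S\g)^{\otimes m})^G$.

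The main obstacle is the bookkeeping in the $O_t$ and $Sp_{2t}$ cases: one must verify categorically (not merely at large integer $n$) that the sign-twisted reversal identity genuinely identifies necklaces with their reversals and forces odd-length palindromes to vanish in the Deligne category. This amounts to a string-diagram computation using the defining relation $\theta = \Id$ on $\mathfrak{o}(V)$ and $\mathfrak{sp}(V)$ from Section 2.3, propagated through the trace diagram; once established, matching Hilbert series degree-by-degree with the classical answer completes the proof.
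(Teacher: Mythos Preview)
Your proof is correct and follows the same strategy as the paper: reduce $\g$-invariants to $G$-invariants via connectedness, then interpolate classical invariant theory (the First Fundamental Theorem for generation, absence of stable polynomial identities for independence). One remark: the ``main obstacle'' you flag in the last paragraph is not actually an obstacle---your third paragraph already handles it, since once the listed $C_w$ specialize to an algebraically independent set and the graded dimensions match, they form a basis in each degree, so the reversal relations and vanishing of odd palindromes are forced automatically (any linear relation among the $C_w$ in $\Rep(G)$ specializes to one at large $n$, hence is trivial).
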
 

\begin{proof} It suffices to check this in $\Rep(G)$, where it follows from the invariant theory for classical groups. 
Namely, to settle the $GL_t$-case, recall that by Weyl's Fundamental Theorem of Invariant Theory, 
the ring of invariants of $m$ square matrices $A_1,...,A_m$ is generated by traces of cyclic words of these matrices, 
and these traces are asymptotically independent when the matrix size goes to infinity (see e.g. \cite{CEG}, Section 11). 
For $O_t$ and $Sp_{2t}$, the proof is similar; namely, one needs to use the well known fact that there is no polynomial identities 
satisfied by skewsymmetric matrices (under an orthogonal or symplectic form) of arbitrary size. 
\end{proof} 

Using standard combinatorics (necklace counting), we get 

\begin{corollary}\label{hilser} 
For $G=GL_t$ the Hilbert series of $((S\g)^{\otimes m})^\g$ is 
$$
h_m(q)=\prod_{j=1}^\infty (1-mq^j)^{-1}. 
$$
\end{corollary}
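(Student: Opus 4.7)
The plan is to combine Proposition \ref{multiinv}(i) with the classical ``cyclotomic identity'' for necklaces.

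First, I would invoke Proposition \ref{multiinv}(i) to identify $((S\g)^{\otimes m})^\g$ with a polynomial algebra having one generator $C_w$ in degree $|w|$ for each cyclic word (necklace) $w$ in the letters $A_1,\ldots,A_m$. Letting $N_m(j)$ denote the number of necklaces of length $j$ on $m$ letters, this immediately gives
$$
h_m(q) = \prod_{j \ge 1} (1-q^j)^{-N_m(j)}.
$$

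The remaining task is to verify the identity $\prod_{j \ge 1}(1-q^j)^{-N_m(j)} = \prod_{j \ge 1}(1-mq^j)^{-1}$. I would take logarithms on both sides, expand each $-\log(1-x)$ as $\sum_{k\ge 1} x^k/k$, and compare coefficients of $q^n$. This reduces the identity to the arithmetic statement
$$
\sum_{j \mid n} j\, N_m(j) = \sum_{k \mid n} \frac{n}{k}\, m^k.
$$
Substituting Witt's formula $j N_m(j)=\sum_{d \mid j}\phi(d)\, m^{j/d}$ into the left side and rewriting the double sum as a sum over pairs $(d,e)$ with $de \mid n$, it becomes
$$
\sum_{de \mid n} \phi(d)\, m^e = \sum_{e\mid n} m^e \sum_{d \mid n/e}\phi(d) = \sum_{e\mid n} (n/e)\, m^e,
$$
which matches the right side after relabeling $e \mapsto k$.

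The only potential obstacle is the cyclotomic identity itself, but since it is a classical consequence of necklace counting and the verification above is routine, there is essentially nothing to prove beyond invoking Proposition \ref{multiinv}(i) and translating it into generating-function language.
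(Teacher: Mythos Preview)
Your proof is correct and follows exactly the route the paper indicates: the paper merely says ``using standard combinatorics (necklace counting)'' after Proposition~\ref{multiinv}, and you have supplied those details. One minor terminological point: the formula $jN_m(j)=\sum_{d\mid j}\phi(d)\,m^{j/d}$ is Burnside's lemma for cyclic group actions (not Witt's formula, which concerns aperiodic necklaces and involves the M\"obius function), but the formula itself and your subsequent manipulation are correct.
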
 

\subsection{Kostant's theorem} 

Now we would like to generalize the results of Kostant \cite{Ko} 
to Deligne categories. 

\begin{proposition}\label{kos} 
$S\g$ is a free module over $(S\g)^\g$. More precisely, there exists a $\g$-stable graded subobject 
$E\subset S\g$ such that the multiplication map $E\otimes (S\g)^\g\to S\g$ is an isomorphism.
\end{proposition}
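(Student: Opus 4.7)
The plan is to mimic the standard proof of Kostant's theorem for classical reductive Lie algebras, carrying out each step inside the Deligne category, and reducing the remaining numerical identity to the classical case at large integer $t$ by interpolation.

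First, I would set $I := (S\g)^\g_+ \cdot S\g$, the ideal in $S\g$ generated by the positive-degree part of $(S\g)^\g$. Since $(S\g)^\g \subset \Hom(\bold 1, S\g)$ acts on $S\g$ by $\g$-equivariant endomorphisms, $I$ is a graded $\g$-stable subobject of $S\g$ in $\mathrm{Ind}\,\Rep(G)$. Set $\bar E := S\g/I$, and use semisimplicity of $\Rep(G)$ (valid since $t \notin \Bbb Z$) to choose a $\g$-equivariant graded splitting $\bar E \hookrightarrow S\g$; call its image $E$, so that $S\g = E \oplus I$ as graded $\g$-stable subobjects.

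Next, the multiplication map $\mu \colon E \otimes (S\g)^\g \to S\g$ is surjective by a standard graded induction on degree: any homogeneous $x \in S\g$ of degree $d$ decomposes as $x = x_E + x_I$ with $x_E \in E$ and $x_I \in I$; writing $x_I$ as a sum of products $y_i z_i$ with $y_i \in (S\g)^\g_+$ homogeneous and $z_i \in S\g$ of strictly smaller degree, the inductive hypothesis places each $z_i$, and hence $x_I$, in the image of $\mu$.

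The substantive step is injectivity, which I would reduce to an identity of Hilbert series. Applying $\Hom_G(X_\lambda, -)$ for each simple $X_\lambda \in \Rep(G)$, and using that $(S\g)^\g$ is an ordinary graded $\Bbb C$-algebra included in $\Rep(G)$ via $A \mapsto A \otimes \bold 1$, we obtain a graded surjection of $(S\g)^\g$-modules
$$
\mu_\lambda \colon \Hom_G(X_\lambda, E) \otimes_{\Bbb C} (S\g)^\g \twoheadrightarrow \Hom_G(X_\lambda, S\g).
$$
The Hilbert series of $(S\g)^\g$ in $q$ is independent of $t$ by Proposition \ref{cent}, and for fixed $\lambda$ and $d$ the numbers $\dim \Hom_G(X_\lambda, S^d \g)$ and $\dim \Hom_G(X_\lambda, E_d)$ are polynomial functions of $t$ which agree with the classical multiplicities in ${\bf Rep}(G_n)$ for every sufficiently large integer $n$, by stability of Schur-functor decompositions of $V$ and $V^*$ in the Deligne setting. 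The classical Kostant theorem applied to the reductive Lie algebras ${\mathfrak{gl}}_n$, ${\mathfrak{o}}_n$, ${\mathfrak{sp}}_{2n}$ then supplies equality of these Hilbert series at infinitely many integer values of $t$, and hence as polynomials in $t$; so $\mu_\lambda$ is a bijection for every $X_\lambda$, whence $\mu$ is an isomorphism.

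The main obstacle is the interpolation step: one must verify carefully that the isotypic multiplicities in $S^d\g$ and $E_d$ really depend polynomially on $t$ and specialize to their classical values at $t = n \gg 0$. This is largely a bookkeeping matter — the relevant decompositions in $\Rep(G)$ are governed by stable branching rules matching the classical ones — but it is the delicate point of the argument, and it is where the semisimplicity hypothesis $t \notin \Bbb Z$ (which enables the splitting of $\bar E$) enters crucially.
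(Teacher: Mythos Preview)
Your route differs from the paper's, and it has a real gap at exactly the point you flag as ``delicate.''

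The paper does not interpolate Kostant's classical theorem. Instead it observes that for each simple $X$, the graded $(S\g)^\g$-module $\Hom_\g(X,S\g)$ is a direct summand (via isotypic decomposition in the second tensor factor) of $(S\g\otimes S\g)^\g$. By Proposition~\ref{multiinv} with $m=2$, the latter is a polynomial algebra on the generators $C_w$ (cyclic words in two letters), and $(S\g)^\g$ sits inside it as the subalgebra on the one-letter words; hence $(S\g\otimes S\g)^\g$ is free over $(S\g)^\g$, and its graded direct summands are free as well. This yields Proposition~\ref{kos} for every $t\notin\Bbb Z$ without ever invoking classical Kostant.

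Your argument, by contrast, hinges on the claim that $\dim\Hom_G(X_\lambda,E_d)$ is a polynomial in $t$ that specializes at large integers to the classical multiplicity. This is not ``bookkeeping governed by stable branching rules.'' Stable branching gives you the multiplicities in $S^d\g$, but $E_d$ is a \emph{cokernel}: its multiplicities are controlled by the rank of the map $\phi_d\colon \bigoplus_i S^{d-\deg C_i}\g\to S^d\g$, and ranks of parameter-dependent maps can jump. Moreover, the semisimple Deligne category is not available at integer $t$, so ``specialize to $t=n$'' must go through the non-faithful functor to ${\bf Rep}(GL_n)$ together with a stable-range full-faithfulness statement, none of which you supply. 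One can in fact close this gap---for instance by checking in the diagram basis that post-composition with $\phi_d$ creates no closed loops (since the symmetrizers and the juxtaposition with $C_i$ involve only through-strands and caps at the top), so the matrix of $\phi_{d,*}$ on $\Hom([r,s],-)$ is literally $t$-independent---but that is a genuine argument, not bookkeeping, and it is precisely the kind of work the paper's approach via Proposition~\ref{multiinv} is designed to avoid.
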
 

\begin{proof} It suffices to prove the result in $\Rep(G)$. It is sufficient to show 
that for each simple $X\in \Rep(G)$, the space $\Hom_\g(X,S\g)$ is a free module over $(S\g)^\g$. 
This follows from the fact that $(S\g\otimes S\g)^\g$ is a free $(S\g)^\g$-module, which 
is a consequence of Proposition \ref{multiinv} (for $m=2$).  
\end{proof} 

In fact, similarly to the classical case, there is a nice choice for $E$ (at least for generic $t$). 
Namely, we can define the harmonic part $H(\g)\subset S\g$, which 
by definition is the kernel of the action of the positive degree elements $(S\g)_+^\g\subset 
(S\g)^\g$ by constant coefficient differential operators (using the identification $\g\cong \g^*$).
In other words, one has $H(\g)=((S\g)_+^\g S\g)^\perp$, where the orthogonal complement is taken under 
the natural nondegenerate form on $S\g$ (the interpolation of the form 
defined in the classical case by the formula $(f,g)=f(\partial)g(x)|_{x=0}$). 
We have the multiplication map $\mu: H(\g)\otimes (S\g)^\g \to S\g$. 

\begin{proposition}\label{kos1} If $t$ is transcendental, then the map $\mu$ is an isomorphism. 
In other words, in Proposition \ref{kos}, one can choose $E=H(\g)$. 
Moreover, the Hilbert series of $E$ and $H(\g)$ are the same for any $t\notin \Bbb Z$. 
\end{proposition}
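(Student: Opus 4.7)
The plan is to realize $H(\g)$ as an explicit complement to the ideal $(S\g)_+^\g\cdot S\g$ via the natural bilinear form on $S\g$. First, one interpolates the classical pairing $(f,g)=f(\partial)g|_{x=0}$ to a morphism $B\colon S\g\otimes S\g\to \bold 1$ in the Deligne category, built from the self-duality $\g\cong\g^*$ (supplied by the interpolated trace form, which is a $\g$-invariant morphism) composed with the canonical pairing $S^d\g\otimes S^d(\g^*)\to\bold 1$. A direct categorical computation verifies the adjointness $B(Pf,g)=B(f,Pg)$ for $P\in(S\g)^\g$, so the two characterizations $H(\g)=\bigcap_{P\in (S\g)^\g_+}\ker P(\partial)=((S\g)_+^\g\cdot S\g)^\perp$ agree.

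For transcendental $t$, the next step is to show $B$ is nondegenerate on each $S^d\g$. On every simple isotypic component, $B$ becomes a symmetric bilinear form whose Gram matrix has entries polynomial in $t$ (the $t$-dependence enters only through loop-removal factors in the Brauer/walled Brauer calculus). Classical Kostant theory yields nondegeneracy for all sufficiently large positive integer $t$, so each Gram determinant is a nonzero polynomial in $t$, hence nonvanishing at transcendental $t$. Nondegeneracy produces the orthogonal splitting $S\g=H(\g)\oplus (S\g)_+^\g\cdot S\g$, from which surjectivity of $\mu$ follows by induction on degree. Comparison with an $E$ from Proposition \ref{kos} (which is also a complement to $(S\g)_+^\g\cdot S\g$) shows that the source and target of $\mu$ have equal classes in $K_0(\Rep(G))[[q]]$, so the surjection $\mu$ is an isomorphism in the semisimple category.

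For the Hilbert series equality at arbitrary $t\notin\Bbb Z$, observe that the multiplicity of any simple $X$ in $H^d(\g_t)$ is the dimension of an intersection of kernels of finitely many linear maps whose entries are polynomials in $t$; hence this multiplicity is upper-semicontinuous in $t$, and equals its generic value (which is the multiplicity of $X$ in $E^d$, by the previous paragraph) on a Zariski-open subset of $\Bbb C$ containing all transcendental values. For the reverse inequality at every $t\notin \Bbb Z$, one shows that $\mu$ is injective independently of nondegeneracy: given the free $(S\g)^\g$-module structure of $S\g$ from Proposition \ref{kos}, a relation $\sum_i h_i z_i=0$ with $z_i\in(S\g)^\g$ linearly independent and $h_i\in H(\g)$ is forced to satisfy $h_i=0$ by a degree-based induction mirroring the classical Kostant argument. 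Injectivity yields $[H(\g)]\cdot[(S\g)^\g]\le [S\g]$, i.e.\ $[H(\g)]\le [E]$ coefficientwise; combined with semicontinuity this gives equality of Hilbert series.

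The main obstacle is the nondegeneracy of $B$ for transcendental $t$: it requires making precise the ``polynomial in $t$'' nature of the Gram determinants in categorical terms and a careful interpolation from the classical case where Kostant applies. A secondary delicate point is the injectivity of $\mu$ at algebraic $t\notin\Bbb Z$, which must be argued without invoking nondegeneracy of $B$.
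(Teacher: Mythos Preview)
Your argument for the first statement (transcendental $t$) is fine and matches the paper's approach: interpolate from classical Kostant. But you are working much harder than necessary, and for the second statement your route has a genuine gap.

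The key point you are missing is that the form $B$ on $S\g$ is nondegenerate for \emph{every} $t\notin\Bbb Z$, not only for transcendental $t$. Indeed, $B$ on $S^d\g$ is the composite
\[
S^d\g\otimes S^d\g \xrightarrow{\ {\rm Id}\otimes S^d\psi\ } S^d\g\otimes S^d(\g^*)\cong S^d\g\otimes (S^d\g)^* \xrightarrow{\ {\rm ev}\ } \bold 1,
\]
where $\psi:\g\to\g^*$ is the trace form. Since $\psi$ is an isomorphism and the evaluation pairing is always perfect in a rigid category (with $S^d(\g^*)\cong(S^d\g)^*$ in characteristic zero), $B$ is nondegenerate for every $t$ for which $\psi$ is, i.e.\ for all $t$ under consideration. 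There is no need for Gram determinants or interpolation from large integers. This is exactly what the paper uses: once $B$ is nondegenerate one has $H(\g)=((S\g)^\g_+S\g)^\perp\cong (S\g/(S\g)^\g_+S\g)^*$ in each degree, and since $E$ is a complement to $(S\g)^\g_+S\g$ by Proposition~\ref{kos}, the Hilbert series of $H(\g)$ and $E$ agree for all $t\notin\Bbb Z$.

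Your alternative route to the second statement via semicontinuity plus injectivity of $\mu$ is problematic. Upper-semicontinuity of the kernel dimension gives $[H(\g)]\ge[E]$ at every $t$, so you need $[H(\g)]\le[E]$, which you try to get from injectivity of $\mu$. But injectivity of $\mu:H(\g)\otimes(S\g)^\g\to S\g$ is \emph{equivalent} to $[H(\g)]\cdot[(S\g)^\g]\le[S\g]=[E]\cdot[(S\g)^\g]$, i.e.\ to the very inequality you want; the sketched ``degree-based induction mirroring the classical Kostant argument'' does not establish it without already knowing that $H(\g)$ is a complement to $(S\g)^\g_+S\g$, which is precisely what nondegeneracy of $B$ gives. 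So the injectivity step is circular as written, and the clean fix is the observation above that $B$ is nondegenerate for all $t\notin\Bbb Z$.
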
 

\begin{proof} The first statement follows from its validity for large integer $t$
(for classical representation categories), which is a classical result of Kostant \cite{Ko}.
To prove the second statement, note that, as explained above, we have a perfect pairing $H(\g)\otimes S\g/(S\g)^\g_+S\g\to \bold 1$, which implies 
that $H(\g)\cong (S\g/(S\g)^\g_+S\g)^\ast$ as graded objects, where by $\ast$ we denote the restricted dual. 
Since by Proposition \ref{kos}, $S\g$ is freely generated by $E$ over $(S\g)^\g$, the result follows.   
\end{proof} 

Computing the Hilbert series of isotypic components of $E$ (or, equivalently, $H(\g)$) is an interesting problem. 
This is the complex rank analog of computing Kostant's generalized exponents of representations (=$q$-analogs 
of the zero weight multiplicity), and it leads to stable limits of these $q$-weight multiplicities for classical groups, studied 
by R. Gupta, P. Hanlon and R. Stanley in 1980s (\cite{Gu1,Gu2,Ha,St}). Namely, for instance, for $G=GL_t$ we have the following result. 

\begin{proposition}\label{sta} (\cite{St}, Proposition 8.1) Let $\lambda,\mu$ be partitions such that $|\lambda|=|\mu|$. 
Then the Hilbert series of $\Hom(X_{\lambda,\mu},E)$ is given by the formula
$$
h_{\Hom(X_{\lambda,\mu},E)}(q)=(s_\lambda*s_\mu)(q,q^2,...),
$$
where $s_\lambda$ are the Schur polynomials, and $*$ denotes the Kronecker product
(corresponding to the tensor product of representations of the symmetric group). 
\end{proposition}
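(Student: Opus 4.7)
The plan is to reduce to the classical situation of $GL_n$ with $n$ large, where the formula is a known stabilization theorem of Stanley.

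First, by Proposition \ref{kos1}, for transcendental $t$ we may replace $E$ with the harmonic subspace $H(\g) \subset S\g$, and the Hilbert series of the isotypic components of $E$ depends only on the fact that $t \notin \Bbb Z$; so it suffices to compute $h_{\Hom(X_{\lambda,\mu},H(\g))}(q)$ at generic $t$. By Proposition \ref{ff1}, the tensor functor $F : \Rep(GL_t) \to \Rep(\g,K)$ is fully faithful, so this $\Hom$ space can be computed inside the Deligne category $\Rep(GL_t)$ itself, using the decomposition of $S\g = S(V \otimes V^*)$ into simple objects $X_{\nu,\rho}$ and the known action of $(S\g)^\g_+$ described by Propositions \ref{cent} and \ref{multiinv}.

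Next, I would argue that for each fixed degree $d$, the multiplicity $m_{\lambda,\mu}(d) := \dim \Hom(X_{\lambda,\mu},H(\g)_d)$ at generic $t$ coincides with the corresponding classical multiplicity for $GL_n$ when $n$ is sufficiently large relative to $d$. Indeed, the decomposition of $S^d(V \otimes V^*)$ into simple objects of $\Rep(GL_t)$ is governed by walled-Brauer/Schur--Weyl combinatorics that agree with the classical decomposition of $S^d(\Bbb C^n \otimes \Bbb C^{n*})$ into $GL_n$-irreducibles $V_{[\nu,\rho]_n}$ for $n \gg d$ (under the labeling $X_{\nu,\rho} \leftrightarrow V_{[\nu,\rho]_n}$), and the same is true for the ideal $(S\g)^\g_+ \cdot S\g$ whose quotient gives $H(\g)$ by Proposition \ref{kos1}. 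Thus $m_{\lambda,\mu}(d)$ equals, for $n$ large, the classical multiplicity of $V_{[\lambda,\mu]_n}$ in the harmonic polynomials $H(\mathfrak{gl}_n)_d$ on $\mathfrak{gl}_n$.

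Finally, I would invoke Stanley's Proposition 8.1 in \cite{St}, which asserts precisely that this classical stable multiplicity generating series equals $(s_\lambda * s_\mu)(q, q^2, \ldots)$ whenever $|\lambda| = |\mu|$, completing the proof. The main obstacle is the middle step: one must carefully verify that the passage from the Deligne-category multiplicity at generic $t$ to the stable-limit classical multiplicity is valid. This relies on the fact that Kostant's decomposition $S\g \cong H(\g) \otimes (S\g)^\g$ (Proposition \ref{kos1}) and the combinatorial description of the Deligne-category structure are compatible with the classical specialization at large integers, so that no "exceptional" multiplicities appear at non-integer $t$.
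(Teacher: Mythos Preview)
The paper does not supply a proof of this proposition: it is stated with attribution to Stanley (\cite{St}, Proposition 8.1), and the surrounding discussion already indicates that the isotypic multiplicities in $E$ (equivalently $H(\g)$) are, by the interpolation principle underlying the Deligne category, precisely the stable limits of Kostant's generalized exponents for $GL_n$ as $n\to\infty$, which is exactly what Stanley computes. Your proposal is the natural elaboration of this implicit argument and matches the paper's approach; the one superfluous step is the appeal to Proposition~\ref{ff1}, since the $\Hom$ space in question is already defined inside $\Rep(GL_t)$ and no passage through $\Rep(\g,K)$ is needed.
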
 

\begin{example}\label{adjointrep} Let $\lambda=\mu=(1)$. Then $s_\lambda*s_\mu=s_1*s_1=s_1=\sum x_i$, so 
$$
h_{\Hom({\mathfrak{sl}}(V),E)}(z)=s_1(q,q^2,q^3,...)=q+q^2+q^3+...=\frac{q}{1-q}. 
$$
\end{example}

Note that Proposition \ref{sta} implies the following combinatorial identity 
(which is easy to obtain from \cite{St} by interpolation): 
$$
\sum_{\lambda,\mu: |\lambda|=|\mu|}(s_\lambda*s_\mu)(q,q^2,...)\dim X_{\lambda,\mu}(t)=\frac{1}{(1-q)^{t^2}}\prod_{j=1}^\infty (1-q^j), 
$$
where $\dim X_{\lambda,\mu}(t)$ is given by formula \eqref{dimfor}. 

Generalizations of Proposition \ref{sta} 
to $O_t$ and $Sp_{2t}$ can be found in \cite{Ha} (Theorem 5.21, Corollary 5.17). 

\subsection{Harish-Chandra modules} 

By analogy with the classical case, we make the following definition. 

\begin{definition} A $(\g,K)$-module $M$ is said to be a Harish-Chandra module if 
it is finitely generated as a $\g$-module (i.e., is a quotient of $U(\g)\otimes X$ for some 
object $X\in \Rep(G)$) and is finite under the action of $Z(\g)$ (i.e., has a finite filtration 
such that $Z(\g)$ acts by a scalar on the successive quotients). 
The category of Harish-Chandra modules is denoted by $HC(\g,K)$. 
\end{definition}

For instance, any finite dimensional $(\g,K)$-module (e.g, one coming from an object of $\Rep(G)$)
is automatically a Harish-Chandra module. 

\begin{remark} We will see below that the finite $K$-type condition, satisfied automatically in the classical case, 
does not always hold in the setting of Deligne categories.  
\end{remark} 

\subsection{Dual principal series Harish-Chandra bimodules} 

Let us now give examples of infinite-dimensional Harish-Chandra 
bimodules for $K=K_t$. Namely, let us construct 
the dual principal series modules. 

We start with the spherical case. 
For a general categorical symmetric pair, 
let us say that $M\in \Rep(\g,K)$ is {\it spherical} if it contains 
a copy of the unit object $\bold 1$ of $\Rep(K)$ ("the spherical vector") which generates $M$. 
Let $Z=U(\kk)^\kk$, $\chi$ be a character of $Z$, and 
$$
U_\chi:=U(\kk)/(z-\chi(z),z\in Z).
$$ 
It is easy to see that $U_\chi\in HC(\kk\oplus \kk,K)$.
Also, we see that ${\rm gr}U_\chi\cong H(\kk)$. 
We call $U_\chi$ the {\it dual spherical principal series Harish-Chandra bimodule} with central character $\chi$. 

Moreover, for generic $\chi,t$ (in a suitable sense), the module 
$U_\chi$ is irreducible. Indeed, for each simple $\Rep(K)$-subobject
$X\subset U_\chi$ there exists $m$ such that $U^{(m)}_\chi X U^{(m)}_\chi$ 
(where $U_\chi^{(m)}$ is the degree $m$ part of $U_\chi$ under the PBW filtration on $U(\kk)$) 
contains $\bold 1$ for large integer $t$ and generic $\chi$, which implies the statement.   

\begin{remark}
Since ${\rm gr}U_\chi\cong H(\kk)$, $U_\chi$ does not have a finite $K$-type 
(see Example \ref{adjointrep}). This shows that in general, we should not expect finite $K$-type 
for irreducible Harish-Chandra modules in Deligne categories (although, as we will see below, some interesting 
Harish-Chandra modules do have finite $K$-type). 
\end{remark}

\begin{proposition}\label{sphe}
Any irreducible spherical Harish-Chandra bimodule $M\in \Rep(\g,K)$ is a quotient of $U_\chi$ for some $\chi$. 
In particular, $M$ contains a unique copy of $\bold 1$. 
\end{proposition}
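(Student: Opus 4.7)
The plan is to construct the surjection $U_\chi \twoheadrightarrow M$ directly from the spherical vector, and then deduce the uniqueness of the copy of $\bold 1$ in $M$ by combining the Kostant-type identification $\mathrm{gr}\,U_\chi\cong H(\kk)$ with the exactness of $\Hom(\bold 1,-)$ in the semisimple category $\Rep(K)$.

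First, writing $\g=\kk\oplus\kk$ and denoting by $L$ and $R$ the actions of the two copies of $\kk$ on $M$, I would start from the given spherical vector $v\colon\bold 1\to M$ and define $\phi\colon U(\kk)\to M$ by $u\mapsto L_u v$. In the sign convention for which the diagonal $\kk$-action inside $\g$ is realized as $L-R$ (so that it matches the natural, adjoint, action on $\kk$-bimodules), $K$-invariance of $v$ translates into $L_X v=R_X v$ for every $X\in\kk$. Because $L$ and $R$ commute, a short induction on monomial length shows $L_u v=v\cdot u$ for all $u\in U(\kk)$; this makes $\phi$ not just left-linear but also right-linear, hence a bimodule morphism. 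Irreducibility of $M$ together with the hypothesis that $v$ generates $M$ forces $\phi$ to be surjective. Both copies of $Z:=U(\kk)^\kk$ sit inside $Z(\g)$, and on the irreducible Harish-Chandra module $M$ each acts by a scalar; since $L_z v=R_z v$, the two scalars coincide on $v$ (hence everywhere), giving a single central character $\chi$. Thus $\phi(z-\chi(z))=0$ for every $z\in Z$, and $\phi$ descends to a surjective bimodule map $\bar\phi\colon U_\chi\twoheadrightarrow M$, which establishes the first assertion.

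For the uniqueness of $\bold 1$ in $M$, I would proceed in three steps. First, Proposition \ref{kos1} (and its proof) gives $\mathrm{gr}\,U_\chi\cong S\kk/(S\kk)_+^\kk S\kk\cong H(\kk)$, via the Kostant decomposition $S\kk\cong H(\kk)\otimes(S\kk)^\kk$. Applying $\Hom(\bold 1,-)$ to this decomposition, and using that $(S\kk)^\kk$ is a direct sum of copies of $\bold 1$, forces the graded identity $\Hom(\bold 1,H(\kk))\otimes(S\kk)^\kk\cong(S\kk)^\kk$, whose only solution in graded vector spaces is $\Hom(\bold 1,H(\kk))=\Bbb C$ concentrated in degree $0$. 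Second, because $t\notin\Bbb Z$ the category $\Rep(K)$ is semisimple, so $\bold 1$ is projective and $\Hom(\bold 1,-)$ is exact; applied term by term along the PBW filtration of $U_\chi$, this yields $\Hom(\bold 1,U_\chi)=\Hom(\bold 1,\mathrm{gr}\,U_\chi)=\Bbb C$. Third, the unique copy of $\bold 1$ in $U_\chi$ is the spherical generator $1$, and $\bar\phi(1)=v\neq 0$, so the kernel $\mathcal K:=\ker\bar\phi$ contains no copy of $\bold 1$. Applying the exact functor $\Hom(\bold 1,-)$ to $0\to\mathcal K\to U_\chi\to M\to 0$ then gives $\Hom(\bold 1,M)=\Bbb C$, as desired.

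The main obstacle, in my estimation, is the bimodule-map verification for $\phi$: it pivots on fixing the $L-R$ sign convention correctly for the identification $\g=\kk\oplus\kk$ and on the induction promoting $L_X v=R_X v$ (on the $K$-invariant vector) to $L_u v=v\cdot u$ for arbitrary $u\in U(\kk)$. Once that step is in place, the rest is a direct assembly of Kostant's theorem with the projectivity of $\bold 1$ in the semisimple Deligne category.
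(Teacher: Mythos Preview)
Your argument is correct and follows the same route as the paper. The paper packages the construction of the surjection via the identification $(U_\chi\otimes U(\kk))/(U_\chi\otimes U(\kk))\kk_{\mathrm{diag}}=U_\chi$ (the universal bimodule on a spherical vector with left central character $\chi$), whereas you build $\phi$ by hand and verify bilinearity directly; these are the same map. The paper's proof does not spell out the uniqueness of $\bold 1$, relying instead on the remark immediately preceding the proposition that $\mathrm{gr}\,U_\chi\cong H(\kk)$; your derivation of $\Hom(\bold 1,H(\kk))=\Bbb C$ from the Kostant decomposition, combined with exactness of $\Hom(\bold 1,-)$, is precisely the intended completion. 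One cosmetic point: for arbitrary $t\notin\Bbb Z$ it is cleaner to invoke Proposition~\ref{kos} (giving $\mathrm{gr}\,U_\chi\cong E$) rather than Proposition~\ref{kos1} (whose isomorphism $E\cong H(\kk)$ is stated for transcendental $t$), though the Hilbert-series clause of~\ref{kos1} ensures the multiplicity count is the same either way.
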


\begin{proof}
By Dixmier's version of Schur's lemma (in the categorical setting), 
the center $Z$ acts in $M$ by some character $\chi$. Since $M$ contains a copy of $\bold 1$, 
we have a nonzero morphism of bimodules 
$$
(U_\chi\otimes U(\kk))/(U_\chi\otimes U(\kk))\kk_{diag}=U_\chi
$$
Since $M$ is irreducible, this morphism is surjective, as desired.  
\end{proof} 

The following problem is therefore interesting. 

\begin{problem} Determine the set $\Sigma=\Sigma_\kk$
of central characters $\chi$ for which $U_\chi$ is a reducible bimodule, i.e., is not a simple algebra
in $\Rep(K)$. 
\end{problem} 

We have just seen that this set is not everything (at least for transcendental $t$), but it is also nonempty. 

Indeed, consider e.g. the case $K=GL_t$. Then if $\chi$ equals the central character $\chi_{\lambda,\mu}$ of the object $X_{\lambda,\mu}$ 
then $U_\chi$ is not simple, as it projects onto $X_{\lambda,\mu}\otimes X_{\lambda,\mu}^*$. 
 
Let us compute $\chi_{\lambda,\mu}$ explicitly. To do so, we should choose generators $C_i$ of 
$U(\kk)^\kk=Z(\kk)$. We have the Duflo isomorphism of algebras
$$
{\rm Duf}: S(\kk)^\kk\to Z(\kk),
$$ 
defined in the same way as in the classical case (\cite{Du}; see \cite{CR} for a review). 
Set $C_i={\rm Duf}(\Tr(A^i))$. Then $C_i|_{V_{\lambda,\mu}}$ will be the interpolation 
of $\sum_j ([\lambda,\mu]_n+\rho_n)_j^i$, where $\rho_n$ is the half-sum of positive roots, 
i.e., 
$$
\chi_{\lambda,\mu}(C_i)=
\sum_j \left(\left(\lambda_j+\frac{t+1}{2}-j\right)^i-\left(\frac{t+1}{2}-j\right)^i\right)+
$$
$$
\sum_j\left(\left(-\mu_j-\frac{t+1}{2}+j\right)^i-\left(-\frac{t+1}{2}+j\right)^i\right)+P_i(t),
$$
where $P_i(t)$ is the modified Bernoulli polynomial, defined for positive integer $t$ by the formula 
$$
P_i(n)=\sum_{k=1}^n \left(\frac{n+1}{2}-k\right)^i;
$$
it is derived from the exponential generating function 
$$
\sum_{i\ge 0}P_i(t)\frac{z^i}{i!}=\frac{\sinh(z t/2)}{\sinh(z/2)}.
$$

Thus, we have $\chi_{\lambda,\mu}\in \Sigma$. 

In fact, it is not hard to see that $\chi_{\lambda,\mu}\in \Sigma$ not just when $\lambda$ and $\mu$ are partitions, 
but actually for any complex values of $\lambda_j$ and $\mu_j$. For instance, 
consider the case when $\lambda=(\ell)$ and 
$\mu=0$, i.e., $X_{\lambda,\mu}=S^\ell V$. 
Then we have a surjective algebra map 
$$
\phi_\ell: U(\kk)\to S^\ell V\otimes S^\ell V^*,
$$ 
and $\phi_\ell |_Z=\chi_{\ell,0}$. 
Let $I_\ell$ be the kernel of this map. Then $I_\ell$ contains an $\ell$-independent subobject $Y_2$ of $U(\kk)$ sitting in filtration 
degree $2$, which at the graded level gives the "rank 1" equation $\wedge^2A=0$, $A\in \kk$ (in the categorical setting), see 
\cite{BJ} (for ${\mathfrak{sl}}(V)\subset \kk$ this relation interpolates the quantization of the minimal coadjoint orbit). 
For $\lambda\in \Bbb C$ let $\widetilde{I}_\lambda$ 
be the ideal $(Y_2)+(C_1-\lambda)$. Then $Q_\lambda:=U(\kk)/\widetilde{I}_\lambda$ is a spherical 
Harish-Chandra bimodule which is a quotient of $U_{\chi_{\lambda,0}}$. 

It is easy to check that as an object of $\Rep(K)$, $Q_\lambda$ has a decomposition 
$$
Q_\lambda=\oplus_{m\ge 0}X_{m,m}
$$
(where $X_{m,m}$ is the simple summand of $S^mV\otimes S^mV^*$ not occurring in 
$V^{\otimes j}\otimes V^{*\otimes j}$ for $j<m$). In particular, $Q_\lambda$ has finite $K$-type. 
Using this decomposition, it is not hard to 
check that $Q_\lambda$ is irreducible if $\lambda$ is not an integer. 
On the other hand, if $\lambda=\ell$ is a positive integer, then 
$Q_\lambda$ is a length $2$ module which can be included in the non-split exact sequence 
$$
0\to \overline{Q}_\ell\to Q_\ell\to S^\ell V\otimes S^\ell V^*\to 0,
$$
where $\overline{Q}_\ell=\oplus_{m\ge \ell+1}X_{m,m}$,
and $S^mV\otimes S^mV^*=\oplus_{m=0}^\ell X_{m,m}$
are irreducible composition factors. Note that the Harish-Chandra bimodule 
$\overline{Q}_\ell$ is not spherical, even though its left and right central characters coincide. 

Similarly, if the length of $\lambda$ is $r$, the length of $\mu$ is $s$, and $r+s=p$, then there is a subobject $Y_{p+1}$ of $U(\kk)$ sitting in degree $p+1$ quantizing 
the relation $\wedge^{p+1}A=0$ in $S\kk$ that is annihilated by the homomorphism 
$$
U(\kk)\to X_{\lambda,\mu}\otimes X_{\lambda,\mu}^*,
$$ 
and we can define the quotient of $U(\kk)$ by the ideal generated by $Y_{p+1}$ and the relations $C_i=\gamma_i$, $i=1,...,p$, which gives an $p$-parameter family 
of spherical Harish-Chandra bimodules that are nontrivial quotients of the corresponding $U_\chi$. These are interpolations to complex rank
of quantizations of coadjoint orbits of ${\mathfrak{gl}}_n$ consisting of matrices of rank $p$ with fixed eigenvalues. 

We obtain the following proposition. 

\begin{proposition}\label{pointsinsigma} 
For any complex $\lambda,\mu$, 
the character $\chi_{\lambda,\mu}$ 
 belongs to $\Sigma$. 
\end{proposition}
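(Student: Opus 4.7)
The plan is to construct, for each complex $\lambda=(\lambda_1,\ldots,\lambda_r)$ and $\mu=(\mu_1,\ldots,\mu_s)$, a nonzero proper quotient bimodule $Q_{\lambda,\mu}$ of $U_{\chi_{\lambda,\mu}}$, by interpolating to complex parameters the rank-$p$ construction (with $p:=r+s$) sketched in the paragraph just before the proposition. The existence of such a quotient exhibits a nontrivial two-sided ideal in $U_{\chi_{\lambda,\mu}}$, hence shows $\chi_{\lambda,\mu}\in\Sigma$.

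First I would note that the displayed formula for $\chi_{\lambda,\mu}(C_i)$ is polynomial in the $\lambda_j,\mu_j$ for each fixed $i$, so it defines a character of $Z(\kk)$ for arbitrary complex parameters. Letting $Y_{p+1}\subset U(\kk)$ be the filtration-degree-$(p+1)$ subobject quantizing the categorical relation $\wedge^{p+1}A=0$, I would form the ideal
$$
I_{\lambda,\mu}:=(Y_{p+1})+(C_i-\chi_{\lambda,\mu}(C_i)\,:\,1\le i\le p),
$$
set $Q_{\lambda,\mu}:=U(\kk)/I_{\lambda,\mu}$, and then establish: (a) the surjection $U(\kk)\twoheadrightarrow Q_{\lambda,\mu}$ factors through $U_{\chi_{\lambda,\mu}}$, i.e.\ $C_i$ acts by $\chi_{\lambda,\mu}(C_i)$ on $Q_{\lambda,\mu}$ for \emph{every} $i\ge 1$, not merely for $i\le p$; and (b) the resulting surjection $U_{\chi_{\lambda,\mu}}\twoheadrightarrow Q_{\lambda,\mu}$ has both nonzero image (i.e.\ $Q_{\lambda,\mu}\ne 0$) and nonzero kernel.

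Claim (a) will be the main obstacle, and amounts to an interpolation of Newton's identities. Classically, for a matrix $A$ of rank $\le p$ with nonzero eigenvalues $\alpha_1,\ldots,\alpha_p$, the higher power sums $\Tr(A^i)$ ($i>p$) are universal polynomial expressions in $\Tr(A),\ldots,\Tr(A^p)$; moreover, the displayed formula for $\chi_{\lambda,\mu}(C_i)$, read as a power sum of the ``formal eigenvalues'' $\lambda_j+\tfrac{t+1}{2}-j$ and $-\mu_j-\tfrac{t+1}{2}+j$, satisfies precisely these Newton polynomial relations for $i>p$. The identity holds in ${\rm gr}\,U(\kk)/(Y_{p+1}^{\rm gr})=S\kk/(Y_{p+1}^{\rm gr})$ (a classical, $t$-stable statement, so it interpolates to any $t$) and will lift to $U(\kk)/I_{\lambda,\mu}$ via the Duflo isomorphism, with the lower-order corrections on the lifted side absorbed into the Bernoulli shift $P_i(t)$ exactly as in the derivation at partition values.

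Given (a), claim (b) will follow by examining the associated graded: one has ${\rm gr}\,I_{\lambda,\mu}\supseteq(Y_{p+1}^{\rm gr})+((S\kk)^\kk_+)$, so ${\rm gr}\,Q_{\lambda,\mu}$ is a quotient of $S\kk/(Y_{p+1}^{\rm gr},(S\kk)^\kk_+)$ --- the interpolated coordinate ring of nilpotent matrices of rank $\le p$. This object is independent of $\lambda,\mu$, is manifestly nonzero (its degree-$0$ part survives), and is a strict quotient of ${\rm gr}\,U_{\chi_{\lambda,\mu}}\cong H(\kk)\cong S\kk/((S\kk)^\kk_+)$, the coordinate ring of the full nilcone (which for transcendental $t$ supports nilpotent orbits of rank $>p$). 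Hence the associated graded of the surjection $U_{\chi_{\lambda,\mu}}\twoheadrightarrow Q_{\lambda,\mu}$ is nonzero but not injective, and the same conclusion transfers to the surjection itself, yielding the required nontrivial proper two-sided ideal.
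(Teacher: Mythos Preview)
Your proposal is correct and follows precisely the paper's own approach: the paper presents this proposition as a direct consequence of the preceding discussion, where it constructs the quotient of $U(\kk)$ by $(Y_{p+1})$ together with $C_i=\gamma_i$ for $i\le p$ and asserts that this yields a nontrivial proper quotient of the corresponding $U_\chi$; you have simply fleshed out the verification of ``nontrivial'' and ``proper'' and of the fact that the central character is indeed $\chi_{\lambda,\mu}$.

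One small point to tighten in your step (a): the Duflo isomorphism is only defined on invariants, not on the whole of $S\kk$ and $U(\kk)$, so it does not by itself transport the relation from $S\kk/(\wedge^{p+1}A)$ to $U(\kk)/(Y_{p+1})$; also the displayed formula for $\chi_{\lambda,\mu}(C_i)$ is not literally a power sum of $p$ numbers, so the classical Newton identity does not apply to it verbatim. The clean argument is the one you essentially gesture at in your last clause: pass to ${\rm gr}$ to see that \emph{some} relation $C_i=f_i(C_1,\ldots,C_p)$ holds in $U(\kk)/(Y_{p+1})$ (by filtered lifting of the interpolated Newton identity), then determine $f_i$ by evaluating on the modules $X_{\lambda,\mu}\otimes X_{\lambda,\mu}^*$ at partition values (where $Y_{p+1}$ acts by zero), and finally extend the identity $\chi_{\lambda,\mu}(C_i)=f_i(\chi_{\lambda,\mu}(C_1),\ldots,\chi_{\lambda,\mu}(C_p))$ to all complex $(\lambda,\mu)$ by polynomiality and Zariski density.
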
 

It would be interesting to know if $\Sigma$ contains any other points than $\chi_{\lambda,\mu}$.

\subsection{Non-spherical Harish-Chandra bimodules} 

Many more Harish-Chandra bimodules can be obtained from dual spherical principal series 
by applying functors of tensoring with finite dimensional bimodules. 
Namely, to each finite dimensional Harish-Chandra bimodule $Y$, 
we can attach the functor $T_Y: \Rep(\g,K)\to \Rep(\g,K)$ given by 
$T_Y(M)=M\otimes Y$ (the usual tensor product of $\kk\oplus \kk$-modules). 
For an irreducible Harish-Chandra bimodule $M$, the bimodule $T_Y(M)$ typically won't be irreducible, 
but one can look at its quotients corresponding to particular central characters  
of the left and right action of $U(\kk)$ (which will be Harish-Chandra bimodules, but in general will not be spherical). 

In general, if $M$ is irreducible, we don't expect $T_Y(M)$ to have finite length. 
However, it is not hard to check, for instance, that $Q_\lambda\otimes Y$ has finite length.
For example, take $Y=V$ (the tautological object under the left action of $\kk$ with the trivial right action). 
We have $X_{m,m}\otimes V=X_{m+1,m}\oplus X_{(m,1),m}$ (the last summand is missing for $m=0$). 
So, as ind-objects of $\Rep(K)$, we  have 
$$
Q_\lambda\otimes V=Q_\lambda'\oplus Q_\lambda'',
$$ 
where $Q_\lambda'=\oplus_{m\ge 0}X_{m+1,m}$ and $Q_\lambda''=\oplus_{m\ge 1}X_{(m,1),m}$.  
Interpolating from positive integer $\lambda$, one can easily show that this is in fact a decomposition of Harish-Chandra bimodules, 
and the subbimodules $Q_\lambda'$ and $Q_\lambda''$ are the eigenobjects of the left action of the center. 
Moreover, one can check that $Q_\lambda',Q_\lambda''$ are irreducible for non-integer $\lambda$. 

More generally, given two central characters $\chi_1,\chi_2$ of $Z$, we can define the category $HC(\kk\oplus\kk,K)_{\chi_1,\chi_2}$ of Harish-Chandra 
bimodules in which the left action of $Z$ is via $\chi_1$ and the right action via $\chi_2$. Clearly, every 
irreducible Harish-Chandra bimodule belongs to one of such categories. The following question is interesting. 

\begin{question} Which of the categories $HC(\kk\oplus\kk,K)_{\chi_1,\chi_2}$ are nonzero?
\end{question} 

Note that if $M\in HC(\kk\oplus\kk,K)_{\chi_1,\chi_2}$ and $X\subset M$ is a simple object of $\Rep(K)$, then we have a nonzero morphism 
$$
N(\chi_1,\chi_2,X):=(U_{\chi_1}\otimes U_{\chi_2}^{\rm op})\otimes_{U(\kk)}X\to M
$$ 
(where $\kk$ is embedded diagonally), so 
we see that $HC(\kk\oplus \kk,K)_{\chi_1,\chi_2}$ is nonzero iff $N(\chi_1,\chi_2,X)\ne 0$ for some simple object $X\in \Rep(K)$. 

\subsection{Dual spherical principal series in the general case}

The above constructions can be generalized to the case of symmetric pairs which are not of group type. 
Indeed, let us construct dual spherical principal series modules. Namely, given a character 
$\chi$ of $Z=U(\g)^\g$, consider the tensor product 
${\mathcal{I}}(\chi)=U_\chi\otimes_{U(\kk)}\bold 1$, 
where $U_\chi=U(\g)/(z-\chi(z),z\in Z)$. Then ${\mathcal{I}}(\chi)\in HC(\g,K)$.
As in the group case, it is easy to show that any spherical irreducible Harish-Chandra module 
is a quotient of ${\mathcal{I}}(\chi)$ for a unique $\chi$. 

\begin{remark} 1. As in the classical case, the module ${\mathcal{I}}(\chi)$ may sometimes be zero. This happens whenever $\chi$ does not vanish on the ideal 
$J=Z\cap U(\g)\kk\subset Z$, which may occur in case AIII (the symmetric pair $({\mathfrak{gl}}_{t+s},GL_t\times GL_s)$, $t,s\in \Bbb C$) and also in cases BDI, CII when one of the 
two parameters $t,s$ is fixed to be an integer. 

2. It is explained in \cite{He2} that 
for classical symmetric pairs $(G,K)$, 
the map $Z(\g)\to D(G/K)^G$ from the center of $U(\g)$ 
to the algebra of invariant differential operators 
on $G/K$ is onto. This implies that in the classical case, 
$U_\chi \otimes_{U(\kk)}\Bbb C$ is the usual dual principal series 
Harish-Chandra module for $G$.  
\end{remark} 

This gives rise to the following problem. 

\begin{problem} Find the set of $\chi$ for which ${\mathcal{I}}(\chi)$ is reducible, and describe irreducible quotients of ${\mathcal{I}}(\chi)$. 
\end{problem} 

Also, more general Harish-Chandra modules may be obtained from ${\mathcal{I}}(\chi)$ and its quotients by tensoring with finite dimensional 
Harish-Chandra modules (coming from objects of $\Rep(G)$), and then taking quotients by various central characters. 
Given $Y\in \Rep(G)$ and a Harish-Chandra module $M$ with central character $\chi_1$, it is an interesting question for which 
central characters $\chi_2$ the module 
$$
(M\otimes Y)/(z-\chi_2(z),z\in Z)(M\otimes Y)
$$ 
is nonzero.  

\subsection{Holomorphic discrete series} 

In the special cases of Hermitian symmetric spaces, i.e. when $\p=\u_+\oplus \u_-$ (namely, cases AIII, DIII,CI), 
we can define the subcategory $HC_+(\g,K)$ of $HC(\g,K)$ of modules with a locally nilpotent action of $\u_+$. 
A basic example of an object of $HC_+(\g,K)$ is the parabolic Verma module 
$M_+(X)=U(\g)\otimes_{U(\kk\oplus \u_+)}X$, where $X\in \Rep(K)$ is a simple object and $\u_+$ acts on $X$ by zero. 
In this case we have the Harish-Chandra homomorphism $HC : Z(\g)\to Z(\kk)$ defined as in the classical case
(namely, by the condition that $z\in HC(z)+U(\g)\u_+$), and the central character of $M_+(X)$ is defined by $\chi_X(z):=HC(z)|_X$. 

The objects $M_+(X)$ are interpolations of holomorphic discrete series modules 
in the classical case. It would be interesting to study the reducibility of $M_+(X)$.

The category $HC_+(\g,K)$ is a subcategory of the appropriate parabolic category O, discussed in the next section. 

\section{Parabolic category O}

We would now like to extend the theory of category O to the setting of Deligne categories. Unfortunately, it is not clear how to define category O in this setting, 
since for $\g={\mathfrak{gl}}_t, {\mathfrak{o}}_t, {\mathfrak{sp}}_{2t}$ it is not clear what a Borel subalgebra is.  
However, one can define the parabolic category O attached to a parabolic subalgebra.
Before doing so, let us review examples of parabolic subalgebras that can be defined 
in the setting of Deligne categories. 

\begin{example}\label{glpar} Let $G=GL_t$, $\g={\mathfrak{gl}}_t$, and $t=t_1+...+t_m$, with $t_i,t\notin \Bbb Z$. Then we have a forgetful functor $\Rep(G)\to \Rep(L)$, where 
$L=GL_{t_1}\times...\times GL_{t_m}$ is a "Levi subgroup" 
(and by $\Rep(L)$ we mean $\Rep(GL_{t_1})\boxtimes...\boxtimes \Rep(GL_{t_m})$). Let $V_i$ be the tautological objects of $\Rep(GL_{t_i})$. 
Then in $\Rep(L)$ we have a decomposition $\g=\u_+\oplus \l\oplus \u_-$, where $\u_+=\oplus_{i<j}V_i\otimes V_j^*$, 
$\u_-=\oplus_{i<j}V_i^*\otimes V_j$, and $\l=\oplus_i V_i\otimes V_i^*={\rm Lie}L$. So we have the parabolic subalgebra 
$\p_+=\l\oplus \u_+$, with Levi subalgebra $\l$ and unipotent radical $\u_+$ (in $\Rep(L)$). 

Note that we have a modification of this example, where a subset of the numbers $t_i$, e.g. $t_1,...,t_s$, are positive integers, 
and we use the classical representation categories ${\bf Rep}(GL_{t_i})$ instead of $\Rep(GL_{t_i})$ for $1\le i\le s$. 
\end{example} 

\begin{example}\label{osppar} Now let $G={\mathfrak{o}}_{2t}$. Given a decomposition $t=t_0+t_1+...+t_m$ with $t_i, i\ge 1, 2t_0,2t\notin \Bbb Z$, 
we have a Levi subgroup $L=O_{2t_0}\times GL_{t_1}\times...\times GL_{t_m}$. Let $V_0,V_1,...,V_m$ be the corresponding tautological objects. 
Then we have $\l={\rm Lie}L$, $\u_+=\oplus_{i<j}V_i\otimes V_j^*$, $\u_-=\oplus_{i<j}V_i^*\otimes V_j$, and $\g=\u_+\oplus \l\oplus \u_-$.  

Also, as in Example \ref{glpar}, we may fix a subset of the numbers $2t_0,t_1,...,t_m$ 
to be positive integers, and use the classical representation categories instead of Deligne categories.
This includes the case when $t_0=0$, when we don't have a factor $O_{2t_0}$.

The same definition can be made for $G={\mathfrak{sp}}_{2t}$, with $L=Sp_{2t_0}\times GL_{t_1}\times...\times GL_{t_m}$.

In all of these cases, we have a parabolic subalgebra 
$\p_+=\l\oplus \u_+$, with Levi subalgebra $\l$ and unipotent radical $\u_+$ (in $\Rep(L)$). 
\end{example} 

Now we can define the parabolic category O. Let $\l=\z\oplus [\l,\l]$, where $\z$ is the center of $\l$. 

\begin{definition} The category $O(\g,L,\u_+)$, called the parabolic category O, is the category of finitely generated $U(\g)$-modules $M$ in ${\rm Ind}\Rep(L)$ such that 

(i) the action of $\u_+$ on $M$ is locally nilpotent; 

(ii) the action of $[\l,\l]$ on $M$ coincides with its natural action (via the embedding $[\l,\l]\subset \l$), 
and the action of $\z$ on $M$ is semisimple. 
\end{definition} 

Typical objects in $O(\g,L,\u_+)$ are parabolic Verma modules. Namely, fix a weight $\lambda\in \z^*$ and a simple object $X\in \Rep(L)$. 
Let $\bold 1_\lambda$ be the $\l$-module which is $\bold 1$ as an object of $\Rep(L)$, and such that $\z$ acts via $\lambda$, while $[\l,\l]$ acts trivially. 

\begin{definition} 
The parabolic Verma module $M_+(X,\lambda)$ is defined by the formula 
$M_+(X,\lambda)=U(\g)\otimes_{U(\l\oplus \u_+)}(X\otimes \bold 1_\lambda)$, where 
 $\u_+$ acts on $X\otimes \bold 1_\lambda$ by zero.  
 We will mostly use the abbreviated notation $M(X,\lambda)$. 
\end{definition} 

Thus, as an ind-object of $\Rep(L)$, we have $M(X,\lambda)=U(\u_-)\otimes X$. 

Let $z_1,...,z_m$ be the natural basis of $\z$ (namely, $z_i$ is the unit of ${\mathfrak{gl}}_{t_i}$). 
Then the Lie algebra $\g$ in $\Rep(L)$ has a $\Bbb Z^m$-grading by eigenvalues of 
the adjoint action of $z_1,...,z_m$, and the eigenobjects are finite dimensional 
(i.e., are finite length objects of $\Rep(L)$). Thus, for every module $M\in O(\g,L,\u_+)$ 
we can define its character with values in the Grothendieck ring of $\Rep(L)$:  
$$
{\rm ch}M(x_1,...,x_m)=\sum_{a_1,...,a_m} M[a_1,...,a_m]x_1^{a_1}...x_m^{a_m},
$$
where $M[a_1,...,a_m]$ is the common eigenobject of $z_1,...,z_m$ on $M$ 
with eigenvalues $a_1,...,a_m$. 

Let $M_-(X,\lambda)$ be the module defined in the same way as 
$M_+(X,\lambda)$, replacing $\u_+$ with $\u_-$. As objects we have $M_-(X,\lambda)=U(\n_+)\otimes X$. 

\begin{proposition}\label{stanO} 
(i) The module $M_+(X,\lambda)=M(X,\lambda)$ has a maximal proper submodule $J(X,\lambda)$ and a unique irreducible quotient $L(X,\lambda)=M(X,\lambda)/J(X,\lambda)$.  

(ii) Every irreducible object in $O(\g,L,\u_+)$ is of the form $L(X,\lambda)$ for a unique $X,\lambda$.

(iii) There is a unique $\g$-invariant pairing 
$$
(,)_\lambda: M_+(X,\lambda)\otimes M_-(X^*,-\lambda)\to \bold 1
$$ 
(the Shapovalov form) which coincides with the evaluation morphism on $X\otimes X^*$. 
The left kernel of $(,)_\lambda$ is $J(X,\lambda)$. 

(iv) For generic $\lambda$ (i.e., outside of countably many hypersurfaces), $M(X,\lambda)$ is irreducible (i.e., $M(X,\lambda)=L(X,\lambda)$).
\end{proposition}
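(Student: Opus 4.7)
The plan is to adapt the standard Verma module theory for parabolic category O to the Deligne setting, organizing all arguments by the $\z$-weight decomposition. As an ind-object of $\Rep(L)$, $M(X,\lambda)=U(\u_-)\otimes X$; under the adjoint action of $z_1,\dots,z_m$ this breaks into finite length eigenobjects $M[a]$ in $\Rep(L)$, and the unique top weight piece (in the partial order for which $\u_+$-weights are positive) is $X\otimes\bold 1_\lambda$ itself.

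For (i) and (ii), I would argue that any proper subobject $N\subset M(X,\lambda)$ has trivial intersection with the top component: $X$ is simple in $\Rep(L)$, so a nonzero intersection forces $X\subset N$, but $X$ generates $M(X,\lambda)$ over $U(\u_-)$, contradicting properness. Thus the sum of all proper subobjects remains proper, yielding the maximal $J(X,\lambda)$ and the unique irreducible quotient $L(X,\lambda)$. Conversely, for any irreducible $M\in O(\g,L,\u_+)$, local nilpotence of $\u_+$ together with semisimplicity of the $\z$-action produces a nonzero maximal-weight subobject annihilated by $\u_+$; simplicity of $M$ forces it to be of the form $X\otimes\bold 1_\lambda$ with $X$ simple in $\Rep(L)$, and the adjunction between parabolic induction and restriction yields a surjection $M(X,\lambda)\twoheadrightarrow M$. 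The pair $(X,\lambda)$ is recovered as the top weight of $L(X,\lambda)$, hence is unique.

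For (iii), the plan is to construct $(,)_\lambda$ inductively down the weight grading. On the top pair $X\otimes X^*$ it equals $\text{ev}_X$; on each lower weight pair, $\g$-invariance uniquely determines the form by moving $\u_+$-elements of the second factor across to act on the first, reducing to the commutator action evaluated on higher-weight pairings. This simultaneously gives uniqueness and, via PBW together with $\l$-invariance of $\text{ev}_X$, existence. The left kernel of $(,)_\lambda$ is a subobject by $\g$-invariance and proper since $\text{ev}_X\ne 0$ on the top, so it is contained in $J(X,\lambda)$. For the reverse inclusion, the induced pairing between $M_+(X,\lambda)/\ker$ and the corresponding quotient of $M_-(X^*,-\lambda)$ is non-degenerate, and any nontrivial proper subobject of $M_+(X,\lambda)/\ker$ would contribute additional kernel; so the quotient is irreducible, whence it equals $L(X,\lambda)$ and $\ker=J(X,\lambda)$.

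For (iv), I would work weight-by-weight. In the semisimple regime $t\notin\Bbb Z$, each $M[a]$ and $M_-[-a]$ decomposes into finitely many simples of $\Rep(L)$ with finite multiplicities, and the Shapovalov pairing induces a bilinear form on the corresponding multiplicity vector spaces whose matrix entries are polynomials in $\lambda\in\z^*$, since they arise from iterated commutators whose Harish-Chandra projection onto $S\z$ is polynomial. The Shapovalov determinant at weight $a$ is then a polynomial in $\lambda$, and $M(X,\lambda)=L(X,\lambda)$ if and only if all these polynomials are nonzero at $\lambda$, which yields the stated countable hypersurface condition. The main obstacle is proving that each such determinant is not identically zero: I would specialize to large integer $t$ and $\lambda$ deep in an antidominant chamber, where classical irreducibility of parabolic Verma modules for $GL_n$, $O_n$, $Sp_{2n}$ applies, and then argue that the categorically defined Shapovalov determinants are compatible with this specialization. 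This is an interpolation argument in the spirit of Propositions \ref{cent} and \ref{multiinv}, but with $\lambda$ as an additional continuous parameter, and making it rigorous in the Deligne framework is the delicate step.
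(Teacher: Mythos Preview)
Your treatment of (i)--(iii) matches the paper's closely: both arguments use the $\Bbb Z^m$-grading by $\z$-eigenvalues to see that proper submodules miss the top component, and both realize the Shapovalov form via the equivalent datum of a map $M_+(X,\lambda)\to M_-(X^*,-\lambda)^*$ (the paper states it this way explicitly; your inductive construction unwinds the same thing).

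The real divergence is in (iv). You compute Shapovalov determinants weight-by-weight, observe they are polynomials in $\lambda$, and then propose to prove non-vanishing by specializing to large integer $t$ and classically antidominant $\lambda$, invoking irreducibility of ordinary parabolic Verma modules. You correctly flag this interpolation as the delicate step, and it is genuinely delicate: at integer $t$ the Deligne category is not abelian and the functor to ${\bf Rep}(GL_n)$ is not faithful, so one has to argue carefully that the categorical determinant survives specialization. The paper avoids this entirely. It observes directly, inside $\Rep(L)$, that for generic $\lambda$ the pairing $B=\lambda\circ[\,,\,]:\u_+\otimes\u_-\to\bold 1$ is nondegenerate, and then runs the standard leading-term argument (the top-degree-in-$\lambda$ part of the Shapovalov form at each weight is governed by the symmetric powers of $B$ via PBW) exactly as in the classical case. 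This is purely internal to the Deligne category and needs no appeal to integer $t$. Your route would ultimately give finer information---it is essentially the path toward the interpolated Jantzen determinant formula the paper mentions in the subsequent remark---but for the bare statement of (iv) the paper's argument is both shorter and more robust.
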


\begin{proof}
The proof is standard. 

Namely, (i) is proved using that every submodule of $M(X,\lambda)$ is graded by $\Bbb Z^m$, and 
thus the sum of all proper submodules of $M(X,\lambda)$ is a proper submodule. 

(ii) follows from the fact that any simple object of $O(\g,L,\u_+)$ contains a subobject annihilated by $\u_+$ 
(by the local nilpotence of the action of $\u_+$). 

(iii) Follows since we can view the pairing in question as a homomorphism $M_+(X,\lambda)\to M_-(X^*,-\lambda)^*$, and 
it's easy to see that such a homomorphism acting by the identity on the highest component exists and is unique, and its image is $L(X,\lambda)$. 

Finally, (iv) follows (in the same way as in the classical case, using (iii)) from the fact that for generic $\lambda$, the pairing $B: \u_+\otimes \u_-\to \bold 1$ 
defined by $B:=\lambda\circ [,]$ is nondegenerate. 
\end{proof} 
 
\begin{remark}
In fact, one can determine the set of $\lambda$ for which (iv) fails by interpolating the Jantzen determinant formula 
for the Shapovalov form on parabolic Verma modules (\cite{Ja}), to get a formula for the determinant of $(,)_\lambda$ on isotypic components. 
\end{remark}  

This gives rise to the following problem. 

\begin{problem} Compute the characters of $M(X,\lambda)$ for all $X,\lambda,t,t_i$
\end{problem} 

This should, in particular, lead to some stable limits of Kazhdan-Lusztig polynomials. 

\section{Lie supergroups}
 
Representations categories  of classical Lie supergroups are interpolated to complex rank quite similarly to categories of $(\g,K)$-modules;
namely, as before, we have a decomposition $\g=\kk\oplus \p$, and the only difference is that $\p$ is odd. 
Indeed, let us define the representation categories 
of the classical Lie supergroups $GL_{t|s}$ and $OSp_{t|2s}$. 

To define $G=GL_{t|s}$ for $t,s\notin \Bbb Z$, let $K=GL_t\times GL_s$. 
Namely, let $V,U$ be the tautological objects of $\Rep(GL_t)$ and $\Rep(GL_s)$, respectively. Then we have a Lie superalgebra 
$\g={\mathfrak{gl}}_{t|s}=\kk\oplus \p$ in $\Rep(K)$, where $\kk={\rm Lie}K$ and $\p=V\otimes U^*\oplus U\otimes V^*$, with the supercommutator 
$S^2\p\to \kk$ defined in the obvious way (pairing the two summands in $\p$). 

Similarly, to define $G=OSp_{t|2s}$ for $t,2s\notin \Bbb Z$, let $K=O_t\times Sp_{2s}$. 
Let $V,U$ be the tautological objects in $\Rep(O_t)$, $\Rep(Sp_{2s})$. Then we can define $\g={\mathfrak{osp}}_{t|2s}=\kk\oplus \p$, where $\kk={\rm Lie}K$, and 
$\p=V\otimes U$, with the obvious supercommutator $S^2\p\to \kk$. 

\begin{definition} In both cases, the category $\Rep(G)$ is the category of $\g$-modules in ${\rm Ind}\Rep(K)$, 
such that the action of $\kk\subset \g$ is the natural action. 
\end{definition} 

\begin{remark} In the classical case, the algebra $\wedge \p$ is finite dimensional, 
and hence any $G$-module is locally finite (i.e., a sum of finite dimensional modules). 
However, this is no longer the case in the Deligne category setting, which is why we are considering 
$\g$-modules in ${\rm Ind}\Rep(K)$ rather than $\Rep(K)$. 
\end{remark} 

We can also define this category for $t$ or $s$ being a positive integer, using the usual representation category of the corresponding group
instead of the Deligne category. 

\begin{remark}
Note that any object $Y\in \Rep(G)$ has a natural $\Bbb Z_2$-grading, by the number of $U$-factors mod 2. 
\end{remark} 

An interesting question is to compute the structure of irreducible representations of $G$. 
For instance, for $G=GL_{t|s}$, we can look at irreducible representations 
with locally nilpotent action of $V\otimes U^*$ (i.e., those which lie in a suitable parabolic category O).
Any such representation $L$ has a unique simple $\Rep(K)$-subobject $X$ killed by $V\otimes U^*$, which determines $L$; we write $L=L(X)$.  
The representations $L(X)$ are $\Bbb Z$-graded (by the number of $V$-factors) with finite dimensional homogeneous 
components, so one may raise the following problem. 

\begin{problem} Compute the character of $L(X)$ for each $X$, i.e., the Hilbert series of $\Hom(Y,L(X))$ for all $Y\in \Rep(K)$. 
\end{problem} 

In the classical case $(t,s\in \Bbb Z)$, this problem was solved in \cite{Se}, 
in terms of a particular kind of Kazhdan-Lusztig polynomials. It would be interesting 
to interpolate this result to complex values of $t$ and $s$. 

\section{Affine Lie algebras} 

Let $\g$ be a Lie algebra in a symmetric tensor category ${\mathcal{C}}$. In this case, given any commutative algebra $R$ in ${\mathcal{C}}$
(for example, an ordinary algebra over $\Bbb C$), we can form a new Lie algebra $\g\otimes R$. In particular, if $R=\Bbb C[z,z^{-1}]$, we obtain 
the loop algebra $L\g=\g[z,z^{-1}]$. 

Now let $\g$ be a quadratic Lie algebra, i.e., a Lie algebra with a symmetric nondegenerate inner product $B: \g\otimes \g\to \bold 1$
(in other words, we have a symmetric isomorphism of $\g$-modules $\g\cong \g^*$). In this case, one can define a 1-dimensional central extension 
$\widehat{\g}$ of $L\g$, using the 2-cocycle $\omega: L\g\otimes L\g\to \bold 1$, given by 
$\omega|_{\g z^m\otimes \g z^n}=m\delta_{m,-n}B$. The Lie algebra $\widehat{\g}$ is called the {\it affine Lie algebra} attached to $\g$. 
Moreover, we have an action of the Virasoro algebra ${\rm Vir}$ on $\widehat{\g}$ by $L_n|_{\g z^m}=-mz^n: \g z^m\to \g z^{m+n}$, and we can form the semidirect
product ${\rm Vir}\ltimes \widehat{\g}$. 

In this setting, we can generalize some standard results about affine Lie algebras. For instance, we have the Sugawara construction. 
Namely, for any Lie algebra $\g$ in ${\mathcal{C}}$ one can define the (symmetric) Killing form ${\rm Kil}: \g\otimes \g\to \bold 1$ by the formula 
$$
{\rm Kil}={\rm ev}_{\g^*}\circ ([,]\otimes {\rm Id}_{\g^*})\circ ({\rm Id}_\g\otimes [,]\otimes {\rm Id}_{\g^*})\circ {\rm Id}_{\g\otimes\g}\otimes {\rm coev}_\g
$$

Now for a quadratic $\g$, let us call a number $k\in \Bbb C$ {\it non-critical} if the form $B_k:=kB+\frac{1}{2}{\rm Kil}: \g\otimes \g\to \bold 1$ 
is nondegenerate. Also, for any $i,j\in \Bbb Z$ define a morphism $C_{ij}(k): \bold 1\to U(\widehat{\g})$ by the formula 
$$
C_{ij}(k)={\rm mult}(B_k^{-1}\cdot(z^i\otimes z^j))\text{ if }i\le j,\ C_{ij}(k)=C_{ji}(k). 
$$

Let us say that a $\widehat{\g}$-module $M$ in ${\rm Ind}{\mathcal{C}}$ is {\it of level $k$} if the central subobject $\bold 1$ of $\widehat{\g}$ acts by $k$ in $M$.
Also, let us say that $M$ is {\it admissible} if for every finite length ${\mathcal{C}}$-subobject $X\subset M$ 
there exists $N\in \Bbb N$ such that for all $n\ge N$, the action map $\g z^n\otimes X\to M$ is zero. 

\begin{proposition}\label{sugawa} Let $M$ be an admissible ${\widehat{\g}}$-module in ${\mathcal{C}}$ of non-critical level $k$. 
Then the action $\widehat{\g}$ on $M$ extends to an action of ${\rm Vir}\ltimes \widehat{\g}$
via the Sugawara formula: 
$$
L_n=\frac{1}{2}\sum_{i+j=n}C_{ij}(k).
$$
Moreover, the Virasoro central charge of this action equals 
$$
c=kB\circ B_k^{-1}.
$$ 
\end{proposition}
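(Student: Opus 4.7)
The argument is a categorical rewriting of the standard Sugawara construction, so the plan has three stages: check that the formula $L_n=\frac{1}{2}\sum_{i+j=n}C_{ij}(k)$ defines a bona fide morphism on an admissible module, verify the commutation $[L_n, xz^m] = -m\, xz^{n+m}$ with the affine generators, and verify the Virasoro relations $[L_n,L_m]=(n-m)L_{n+m}+\delta_{n+m,0}\frac{n^3-n}{12}c$ with the stated central charge.

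First I would show that, for any finite length subobject $X \subset M$, the expression $L_n|_X = \frac{1}{2} \sum_{i+j=n} C_{ij}(k)|_X$ has only finitely many nonzero terms. For $i \le j$, $C_{ij}(k)|_X$ is, by construction, the composition that first contracts the right tensor factor of $B_k^{-1}$ with the action morphism $\g z^j \otimes X \to M$ and then contracts the left factor with a $\g z^i$-action. Admissibility supplies an $N$ with $\g z^p \otimes X \to M$ zero for all $p \ge N$; since for $i+j=n$ and $i \le j$ we have $j \ge n/2$, only the finitely many indices in the range $n/2 \le j < N$ contribute. Thus $L_n$ is a well-defined endomorphism of $M$ in ${\rm Ind}\,{\mathcal{C}}$.

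Next I would verify $[L_n, xz^m] = -m\, xz^{n+m}$ for $x \in \g$. Expanding the commutator in $U(\widehat{\g})$ via $[yz^i, xz^m] = [y,x]z^{i+m} + i\,\delta_{i,-m}B(y,x)\,\bold 1$, there are two contributions. The bracket contribution can be rewritten, using invariance of $B_k$ and the categorical identity for ${\rm Kil}$ given in the statement, as an operator involving $\frac{1}{2}{\rm Kil}\circ B_k^{-1}$, while the cocycle contribution gives the operator $kB\circ B_k^{-1}$. Since $B_k = kB+\frac{1}{2}{\rm Kil}$ by definition, these combine to $B_k\circ B_k^{-1}=\text{Id}_\g$, leaving exactly $-m\,xz^{n+m}$. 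This is the step that uses non-criticality essentially: the existence of $B_k^{-1}$ is what makes the construction close.

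Finally I would verify the Virasoro relations by computing $[L_n,L_m]$, repeatedly applying the commutation relation just established. The bulk of the commutator gives $(n-m)L_{n+m}$; the central anomaly arises because the sums defining each $L_n$ include both creation ($i<0$) and annihilation ($j>0$) modes, and the normal-ordering convention $i\le j$ produces reordering scalars in $\text{End}(\bold 1)=\Bbb C$ precisely when $n+m=0$. Assembling this bookkeeping, the anomaly should take the form $\delta_{n+m,0}\frac{n^3-n}{12}(kB\circ B_k^{-1})\,\text{Id}_M$, matching the stated $c=kB\circ B_k^{-1}$. I expect the main obstacle to be this last calculation: ensuring that all the categorical reorderings produce a single scalar in $\text{End}(\bold 1)$ with the expected coefficient, rather than a more complicated morphism. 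A useful sanity check is that the answer must reduce to the classical $c=k\dim\g/(k+h^\vee)$ when ${\mathcal{C}}=\text{Vect}$ and $\g$ is a simple Lie algebra with Killing form $2h^\vee B$.
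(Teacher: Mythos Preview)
Your proposal is correct and follows exactly the standard Sugawara argument; the paper itself does not spell out any details, saying only that ``the proof is standard, see e.g.\ [Ka].'' Your outline is thus more detailed than what the paper provides, but the approach is the same.
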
 

\begin{proof} The proof is standard, see e.g. \cite{Ka}. 
\end{proof} 

In particular, if $\g$ is a simple Lie algebra (i.e., $\g$ is a simple $\g$-module), then ${\rm Kil}=gB$, where $g$ is the "dual Coxeter number" of $\g$ 
(with respect to $B$). Then we get that $k$ is non-critical iff $k\ne -g$ (so $-g$ is called the critical level), and $c=\frac{k \dim\g}{k+g}$. 

Let us now specialize to the case when ${\mathcal{C}}=\Rep(G)$, where $G=GL_t$, $O_t$, or $Sp_{2t}$, and $\g={\rm Lie}(G)$, 
with the form $B$ being the interpolation of the trace form (in the first case, we will also consider $\g_0={\mathfrak{sl}}_t$, 
which, unlike ${\mathfrak{gl}}_t$, is a simple Lie algebra). Then the Sugawara construction applies, 
with the dual Coxeter numbers $g=t$ for ${\mathfrak{sl}}_t$, $g=t-2$ for ${\mathfrak{o}}_t$, and $g=2t+2$ for 
${\mathfrak{sp}}_{2t}$ (note that the dual Coxeter number of ${\mathfrak{sp}}_{2n}$ is $n+1$, but we use a different normalization 
of the bilinear form from the standard one, which gives twice as much). 

One can also consider the theory of parabolic category O for $\widehat{\g}$, similarly to Section 4. Namely, we define the category 
$O_k(\widehat{\g})$ to be the category of $\widehat{\g}$-modules of level $k$ in $\Rep(G)$ on which the action of $\g$ 
is the natural one, and the action of $z\g[z]$ is locally nilpotent. Typical objects 
of $O_k(\widehat{\g})$ are Verma modules 
$$
M(X,k)=U(\widehat{\g})\otimes_{U(\g[z]\oplus \bold 1)}X=U(z^{-1}\g [z^{-1}])\otimes X,\ X\in \Rep(G), 
$$
where 
$\bold 1$ acts on $X$ by $k$ and $z\g [z]$ by zero. Similarly to Section 4, this module 
admits a Shapovalov form and has a unique simple quotient $L(X,k)$, and $M(X,k)=L(X,k)$ for all but countably many $k$.   
In fact, by looking at the action of the Casimir operator $L_0+d$ (where $d$ is the degree operator), 
one can check that the numbers $k$ for which $M(X,k)\ne L(X,k)$ are all of the form $r_1t+r_2$, where $r_1,r_2\in \Bbb Q$. 

This gives rise to the following problem: 

\begin{problem} Determine the characters of $L(X,k)$ in the case when $L(X,k)\ne M(X,k)$. 
\end{problem} 

As an example, consider the basic representation of $\widehat{\g}_0$, where $G=GL_t$ and $\g_0={\mathfrak{sl}}_t$, namely, $\bold V:=L(\bold 1,k=1)$. 
Note that this representation is graded by powers of $z$. Thus, for any $X\in \Rep(G)$, we can ask for the Hilbert series of the isotypic component of $X$, 
$$
h_X(q)=\sum_n \dim\Hom(X,\bold V)[-n]q^n.
$$
Let us determine $h_X(q)$. To do so, recall that in the classical case of ${\mathfrak{sl}}_n$, we have the Frenkel-Kac vertex operator construction (\cite{Ka}), which 
gives the character formula 
$$
{\rm ch}\bold V=\frac{\sum_{\beta\in Q}q^{\beta^2/2}e^\beta}{\prod_{j\ge 1}(1-q^j)^{n-1}},
$$
where $Q$ is the root lattice. 
So we would like to interpolate this formula. For this purpose we'd like to write this sum as a linear combination of irreducible characters $\chi_\lambda$  of ${\mathfrak{sl}}_n$. 
This formula is well known (see \cite{Ka}, Exercise 12.17), but we recall its derivation for reader's convenience. We have 
$$
{\rm ch}\bold V=\sum_{\lambda \in Q\cap P_+}C_{\lambda,n}(q)\chi_\lambda,
$$
where 
$$
C_{\lambda,n}(q)=|W|^{-1}\frac{\sum_{\beta\in Q}q^{\beta^2/2}(\Delta^2e^\beta,\chi_\lambda)}{\prod_{j\ge 1}(1-q^j)^{n-1}},
$$
$P_+$ is the set of dominant integral weights, 
$(,)$ denotes the inner product defined by $(e^\beta,e^{\gamma})=\delta_{\beta,-\gamma}$, $W=S_n$ is the Weyl group, and $\Delta$ is the Weyl denominator. 
By the Weyl character formula, we have 
$$
\Delta \chi_\lambda=m_{\lambda+\rho}^-:=\sum_{w\in W}(-1)^w e^{w(\lambda+\rho)}.
$$
 Thus we get 
$$
C_{\lambda,n}(q)=|W|^{-1}\frac{\sum_{\beta\in Q}q^{\beta^2/2}(\Delta e^\beta, m_{\lambda+\rho}^-)}{\prod_{j\ge 1}(1-q^j)^{n-1}}.
$$
Now by the Weyl denominator formula, $\Delta=\sum_{w\in W}(-1)^w e^{w\rho}$, so we get 
$$
C_{\lambda,n}(q)=\frac{\sum_{w\in W}(-1)^wq^{(\lambda+\rho-w\rho)^2/2}}{\prod_{j\ge 1}(1-q^j)^{n-1}}=
q^{\lambda^2/2}\frac{\prod_{\alpha>0}(1-q^{(\lambda+\rho,\alpha)})}{\prod_{j\ge 1}(1-q^j)^{n-1}}. 
$$
Now we can see that if $\lambda$ and $\mu$ are partitions with $|\lambda|=|\mu|$, then 
$C_{[\lambda,\mu]_n,n}(q)$ has a limit $C_{\lambda,\mu,\infty}(q)$ as $n\to \infty$.
For example, if $\lambda=\mu=0$, we get 
$$
C_{0,0,\infty}(q)=\prod_{j=2}^\infty (1-q^j)^{-j+1}.
$$
In general, if $\lambda=(\lambda_1,...,\lambda_r)$ and $\mu=(\mu_1,...,\mu_s)$, we get 
$$
C_{\lambda,\mu,\infty}(q)=
q^{\frac{\lambda^2+\mu^2}{2}}C_{0,0,\infty}(q)\prod_{1\le i<j\le r}\frac{1-q^{\lambda_i-\lambda_j+j-i}}{1-q^{j-i}}\prod_{1\le i<j\le s}\frac{1-q^{\mu_i-\mu_j+j-i}}{1-q^{j-i}}\times
$$
$$
\prod_{i=1}^r\prod_{j=0}^{\lambda_i-1}(1-q^{r+1+j-i})^{-1}
\prod_{i=1}^s\prod_{j=0}^{\mu_i-1}(1-q^{s+1+j-i})^{-1}
$$
For example, if $\lambda=\mu=(p)$, we get 
$$
C_{p,p,\infty}(q)=q^{p^2}C_{0,0,\infty}(q)\prod_{j=1}^p (1-q^j)^{-2}=
$$
$$
q^{p^2}(1-q)^{-2}(1-q^2)^{-3}...(1-q^p)^{-p-1}(1-q^{p+1})^{-p}(1-q^{p+2})^{-p-1}...
$$

Thus, we obtain the following proposition. 

\begin{proposition}\label{charform}
The Hilbert series of $\Hom(X_{\lambda,\mu},\bold V)$ equals $C_{\lambda,\mu,\infty}(q)$.  
\end{proposition}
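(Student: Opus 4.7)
The plan is to derive the formula from the classical Frenkel--Kac character computation (already carried out above) via a polynomial interpolation argument. Since $C_{\lambda,\mu,\infty}(q)$ does not depend on $t$, the content of the proposition is that the Hilbert series in $\Rep(GL_t)$ agrees with the stable classical limit.

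First, I would establish that for each fixed $m\ge 0$, the multiplicity $\dim\Hom(X_{\lambda,\mu},\bold V[-m])$ is a polynomial function of $t$ (for $t\notin \Bbb Z$). The Verma module
$$
M(\bold 1,1)=U(z^{-1}\g_0[z^{-1}])\otimes \bold 1
$$
is built from tensor powers of $\g_0={\mathfrak{sl}}_t$, and its isotypic decomposition in $\Rep(GL_t)$ is governed by Littlewood--Richardson-type multiplicities that are $t$-independent; so multiplicities in $M(\bold 1,1)[-m]$ are constant in $t$. The irreducible quotient $L(\bold 1,1)$ is $M(\bold 1,1)$ modulo the kernel of the Shapovalov form $(,)_1$ from Proposition \ref{stanO}(iii), and the rank of this form on each graded piece can be tracked polynomially in $t$ by interpolating the Jantzen determinant formula.

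Second, I would specialize to large integer $t=n$. For $n$ sufficiently large relative to $m$, $|\lambda|$, $|\mu|$, the object $\bold V\in\Rep(GL_t)$ specializes to the classical level-one basic representation $\bold V_n$ of $\widehat{\mathfrak{sl}}_n$, and $X_{\lambda,\mu}$ specializes to $V_{[\lambda,\mu]_n}$. Hence at $t=n$ the Deligne multiplicity coincides with $[q^m]C_{[\lambda,\mu]_n,n}(q)$, which is the classical quantity computed above via Frenkel--Kac and the Weyl character formula. The explicit formulas derived for $C_{\lambda,n}(q)$ show that $[q^m]C_{[\lambda,\mu]_n,n}(q)$ stabilizes, as $n\to\infty$, to $[q^m]C_{\lambda,\mu,\infty}(q)$; the convergence is coefficient-wise and attained once $n\ge N_0(m,|\lambda|,|\mu|)$.

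Third, a polynomial in $t$ which takes the constant value $[q^m]C_{\lambda,\mu,\infty}(q)$ at all integers $n\ge N_0$ must equal that constant identically. Applying this for every $m$ yields the claimed identity of Hilbert series.

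The main obstacle is the polynomial dependence in the first step, specifically for the quotient $L(\bold 1,1)$: although $M(\bold 1,1)$ has a uniform description, level $k=1$ is non-generic and the maximal proper submodule can have a subtle $t$-dependence. A cleaner alternative would be to construct $\bold V$ directly in ${\rm Ind}\Rep(GL_t)$ by interpolating the Frenkel--Kac vertex operator realization on $\Bbb C[p_1,p_2,\ldots]\otimes \Bbb C[Q]$; this would produce the character without ever analyzing the Shapovalov kernel, at the cost of making sense of the lattice part $\Bbb C[Q]$ in the Deligne setting.
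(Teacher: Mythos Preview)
Your approach is essentially the paper's: the proposition is stated immediately after the derivation of $C_{\lambda,\mu,\infty}(q)$ with the phrase ``Thus, we obtain the following proposition,'' so the paper's argument is precisely the classical Frenkel--Kac character computation followed by interpolation to the Deligne category, which is exactly what you spell out in more detail. Your concern about the polynomial dependence of the Shapovalov kernel at the non-generic level $k=1$ is a legitimate technical point, but the paper does not address it either and treats the interpolation step as routine.
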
 

Similarly, if $\widetilde{\bold V}$ is the basic representation of $\widehat{\g}$, where 
$\g={\mathfrak{gl}}_t$, then $\widetilde{\bold V}={\bold V}\otimes {\mathcal{F}}$, where ${\mathcal{F}}$ is the standard Fock space, so the Hilbert series of $\Hom(X_{\lambda,\mu},\widetilde{\bold V})$ 
is $\widetilde{C}_{\lambda,\mu,\infty}(q)$, where 
$\widetilde{C}_{\lambda,\mu,\infty}(q)=C_{\lambda,\mu,\infty}(q)\prod_{i=1}^\infty (1-q^i)^{-1}.$
For example, 
$\widetilde{C}_{0,0,\infty}(q)=\prod_{j=1}^\infty (1-q^j)^{-j}.$

\begin{remark} Note that in the classical situation, $\widetilde{\bold V}$ is a vertex operator algebra, and $\widetilde{\bold V}^G$ 
is known to be the affine $W$-algebra $W_n=W({\mathfrak{gl}}_n)$ of central charge $n$ (\cite{F}; see also \cite{FKRW}). Similarly, 
in the Deligne category setting, $\widetilde{\bold V}$ is a vertex operator algebra in ${\rm Ind}\Rep(G)$, 
and $\widetilde{\bold V}^G$ is the $W_{1+\infty}$ vertex operator algebra with central charge $t$, see \cite{FKRW}. 
Moreover, the spaces $\Hom(X_{\lambda,\mu},\widetilde{\bold V})$ are modules over this vertex operator algebra.  
\end{remark}

\section{Yangians} 

As in the previous section, let $\g$ be a quadratic Lie algebra in a symmetric tensor category ${\mathcal{C}}$. 
In this case, following Drinfeld (\cite{Dr}; see \cite{CP} for a detailed discussion), one can define an algebra $Y(\g)$ in ${\mathcal{C}}$ called the {\it Yangian} of $\g$, which 
is a Hopf algebra deformation of $U(\g[z])$. More precisely, Drinfeld gave a definition of $Y(\g)$ when $\g$ is a simple Lie algebra in the category of vector spaces, but the definition extends verbatim to our more general setting. In particular, this construction allows us to define $Y(\g)$ for $\g={\rm Lie}G$, where $G=GL_t$, $O_t$, or $Sp_{2t}$. 
This $Y(\g)$ is a Hopf algebra in ${\rm Ind}\Rep(G)$. 

The PBW theorem for $Y(\g)$ says that $Y(\g)$ has a filtration such that ${\rm gr}Y(\g)=U(\g[z])$ (with grading by powers of $z$); it particular, it contains $U(\g)$ as a Hopf subalgebra. 
As usual, it is not hard to see that there is a surjective map $U(\g[z])\to {\rm gr}Y(\g)$ 
(as the defining relations of the Yangian deform the defining relations of $U(\g[z])$), but it 
is harder to show that this map is also injective. This result was proved by Drinfeld in the classical setting, 
and therefore follows in Deligne categories by interpolation.

Drinfeld's relations for $Y(\g)$ are rather complicated, so let us give a different, simpler presentation of $Y(\g)$ for $G=GL_t$, which is 
an interpolation of the Faddeev-Reshetikhin-Takhtajan presentation (see \cite{Mo} for a review and references). To introduce this presentation, let $V$ be the tautological object of $\Rep(G)$, 
and start with the tensor algebra $A:={\bold T}(\oplus_{i\ge 0} (V\otimes V^*)_{(i)})$. Let $T_i\in \Hom(\bold 1,(V^*\otimes V)\otimes A)$ 
be the coevaluation map 
$$
\bold 1\to (V^*\otimes V)\otimes (V\otimes V^*)_{(i)}.
$$
Let $T(u)=1+T_0u^{-1}+T_1u^{-2}+...$ 
be the generating function of $T_i$, and let $R(u)=1+\frac{\sigma}{u}$, where $\sigma: V\otimes V\to V\otimes V$ is the permutation. 
Consider the series 
$$
Q(u,v):=(u-v)(R^{12}(u-v)T^{13}(u)T^{23}(v)-T^{23}(v)T^{13}(u)R^{12}(u-v))
$$
 in $u^{-1}$ and $v^{-1}$ with coefficients $Q_{ij}\in \Hom(\bold 1, (V^*\otimes V)\otimes (V^*\otimes V)\otimes A)$. 
(so that $Q=\sum Q_{ij}u^iv^j$). Regard $Q_{ij}$ as a morphism 
$$
Q_{ij}:=(V\otimes V^*)\otimes (V\otimes V^*)\to A
$$
(landing in degree $2$). Let $J$ be the ideal in $A$ generated by the images of all the $Q_{ij}$. 

\begin{definition}\label{yanggln} The algebra $Y(\g):=A/J$ is called the Yangian of $\g$. 
\end{definition}  

One can show that Definition \ref{yanggln} is equivalent to the above (it is shown in the same way as in the classical case). 
In particular, the copy of $U(\g)$ inside $Y(\g)$ is generated by the image of $T_0$ (regarded as a morphism $V\otimes V^*\to Y(\g)$); 
moreover, $T_i$ corresponds in the associated graded algebra to $\g z^i$. Finally, the Hopf algebra structure 
is written very simply in terms of this presentation: $\Delta(T(u))=T^{12}(u)T^{13}(u)$, $\varepsilon(T(u))=1$, $S(T(u))=T(u)^{-1}$
(where $\Delta$ is the coproduct, $\varepsilon$ the counit, and $S$ is the antipode). 

Besides greater simplicity than the general definition, Definition \ref{yanggln} has the important advantage that 
it comes with a family of representations. Namely, since $R$ satisfies the quantum Yang-Baxter equation 
$$
R^{12}(u_1-u_2)R^{13}(u_1-u_3)R^{23}(u_2-u_3)=R^{23}(u_2-u_3)R^{13}(u_1-u_3)R^{12}(u_1-u_2),
$$
we find that the assignment 
$T(u)\mapsto R(u-z)$, $z\in \Bbb C$, defines a homomorphism ${\rm ev}_z: Y(\g)\to U(\g)$, 
called the {\it evaluation homomorphism}. For $X\in \Rep(G)$, denote by $X(z)$ the pullback ${\rm ev}_z^*X$ to a representation of $Y(\g)$.
Then, given any simple objects $X_1,...,X_k\in \Rep(G)$ and $z_1,...,z_k\in \Bbb C$, we can construct the representation 
$X_1(z_1)\otimes....\otimes X_k(z_k)$ of $Y(\g)$. As in the classical case, these representations are irreducible for generic 
parameter values, but become reducible for special values, and other irreducible representations 
are obtained as their composition factors. 

This gives rise to the following problem.

\begin{problem} Classify irreducible representations of $Y(\g)$ in $\Rep(G)$ on which the action of $\g$ is the natural one
(generalizing the theory of Drinfeld polynomials, \cite{CP}) and compute their decompositions into simple objects of $\Rep(G)$.
\end{problem} 
 
It is known that in the classical setting, these irreducible representations have a rich structure, related to 
geometry of quiver varieties, cluster algebras, Hirota bilinear relations, etc.

\end{document}